\author{Ties Laarakker}
\address{}
\email{p.t.a.laarakker@uu.nl}
\title{Vertical Vafa-Witten invariants}
\newcommand{\CC}{\mathbb{C}}
\newcommand{\PP}{\mathbb{P}}
\newcommand{\QQ}{\mathbb{Q}}
\newcommand{\ZZ}{\mathbb{Z}}
\newcommand{\Z}{\mathsf{Z}}
\newcommand{\D}{\mathcal{D}}
\newcommand{\E}{\mathcal{E}}
\newcommand{\F}{\mathcal{F}}
\newcommand{\I}{\mathcal{I}}
\renewcommand{\L}{\mathcal{L}}
\renewcommand{\O}{\mathcal{O}}
\renewcommand{\P}{\mathcal{P}}
\newcommand{\N}{\mathcal{N}}
\newcommand{\SW}{\mathrm{SW}}
\renewcommand{\t}{\mathfrak{t}}
\DeclareMathOperator{\Pic}{Pic}
\DeclareMathOperator{\ch}{ch}
\DeclareMathOperator{\Ext}{Ext}
\DeclareMathOperator{\EExt}{\mathscr{E}\mathnormal{xt}}
\DeclareMathOperator{\Hom}{Hom}
\DeclareMathOperator{\RHom}{\mathnormal{R}\mathscr{H}\mathnormal{om}}
\DeclareMathOperator{\rk}{rk}
\newcommand{\pr}{\mathrm{pr}}
\DeclareMathOperator{\Td}{Td}
\DeclareMathOperator{\tr}{tr}
\newcommand{\vir}{\mathrm{vir}}
\newcommand{\vertical}{\mathrm{vert}}
\newcommand{\VW}{\mathrm{VW}}
\newcommand{\vectorL}{L}
\newtheorem{result}{Theorem}
\numberwithin{equation}{section}
\newtheorem{theorem}[equation]{Theorem}
\newtheorem{corollary}[equation]{Corollary}
\newtheorem{definition}[equation]{Definition}
\newtheorem{lemma}[equation]{Lemma}
\newtheorem{proposition}[equation]{Proposition}
\newtheorem{conjecture-definition}[equation]{Conjecture-Definition}
\theoremstyle{remark}
\newtheorem{remark}[equation]{Remark}
\def\l@subsection{\@tocline{2}{0pt}{2pc}{6pc}{}}
\begin{document}

\begin{abstract}
We show that \emph{vertical} contributions to (possibly semistable) Tanaka-Thomas-Vafa-Witten invariants are well defined for surfaces with $p_g(S)>0$, partially proving conjectures of \cite{TT2} and \cite{T}. Moreover, we show that such contributions are computed by the same tautological integrals as in the stable case, which we studied in \cite{L}. Using the work of Kiem and Li, we show that stability of universal families of vertical Joyce-Song pairs is controlled by cosections of the obstruction sheaves of such families.
\end{abstract}

\maketitle

\tableofcontents

\section{Introduction}
\subsection{Joyce-Song pairs}\label{SecModuliSpace}
We will consider Joyce-Song pairs and their moduli spaces, which have been studied in \cite{JS} and (in our context) in \cite{TT2}. Let \(S\) be an smooth algebraic surface over \(\CC\) with a polarisation \(H\), and let
\[q\colon X \rightarrow S\]
be the total space of the canonical bundle of \(S\). We will consider moduli spaces of certain compactly supported coherent sheaves on $X$. We are particularly interested in strictly semistable sheaves. Since semistable sheaves may have non-trivial automorphisms, we will work with sheaves that have been rigidified by a Joyce-Song section.

\begin{definition}
Let $m \gg 0$ be an integer. A \emph{Joyce-Song pair} is a pair $(\E, s)$ consisting of a coherent compactly supported sheaf $\E$ on $X$, and a non-zero section
\[s\colon \O_X(-m) \rightarrow \E\,.\]
A Joyce-Song pair $(\E,s)$ is called \emph{stable} if $\E$ is Gieseker semistable, and $s$ does not factor through any strict subsheaf $\F\subsetneq\E$ for which we have an equality
\[p(\F) = p(\E)\]
of reduced Hilbert polynomials.
\end{definition}

We fix a \emph{charge} $\gamma = (r,c_1,c_2) \in H^\mathrm{even}(S)$ with \(r \geq 1\), and a line bundle $L$ on $S$ with $c_1(L) = c_1$. We will write $\gamma^\perp = (r,L,c_2)$.
Let $\E$ be a compactly supported coherent sheaf on $X$. We will say that $\E$ is of \emph{type $\gamma^\perp$} if
\begin{itemize}
\item $\rk(q_*\E) = r$
\item $\det(q_*\E) \cong L $
\item $c_2(q_*\E) = c_2$
\item $\E$ has zero {\it centre of mass} (see \cite{TT})\,.
\end{itemize}

We will write
\[ \P^\perp = \P^\perp_{\gamma}(m) = \Big\{ \text{stable Joyce-Song pairs }(\E,s) \text{ with $\E$ of type $\gamma^\perp$} \Big\}\]
for the moduli space of Joyce-Song pairs.

\subsection{Vafa-Witten invariants}
A point \( (\E,s)\in\P^\perp\) can be viewed as an object
\[I^\bullet = [\O_X(-m) \xrightarrow{s} \E] \in \mathcal{D}^b(X)\]
in the derived category of $X$, where \(\O_X(-m)\) is placed in degree 0. By \cite{TT2, JS}, the moduli space \(\P^\perp\) carries a perfect obstruction theory governed by
\[R\Hom_X(I^\bullet,I^\bullet)_\perp\,,\]
which is given by
\[R\Hom_X(I^\bullet,I^\bullet) \cong R\Hom_X(I^\bullet,I^\bullet)_\perp 
	\oplus H^*(\O_X) \oplus H^{\geq1}(\O_S) \oplus H^{\leq1}(K_S)[-1]\,.\]
Vafa-Witten invariants have been defined \emph{conjecturally} in \cite{TT2,T}. We will give their definition for an algebraic surface $S$ with $p_g(S)>0$.

Consider the natural \(\CC^*\) action on \(X\rightarrow S\), given by scaling the fibres. It induces a \(\CC^*\) action on \(\P^\perp\). The unrefined invariant will be defined by the virtual localization formula \cite{GP}.

\begin{conjecture-definition}\textup{\cite{TT2}} \label{ConDefUnref}
There exists a rational number $\VW_{\gamma}$, called the \emph{Vafa-Witten invariant} of \((S,H, \gamma^\perp)\), such that for all $m\gg 0$, we have
\begin{align*}
\int_{\left[\P^\perp_\gamma(m)\right]^\vir} 1
	\coloneqq{} & \int_{\left[(\P^\perp_\gamma(m))^{\CC^*}\right]^\vir} \frac{1}{e(N^\vir)} \\
	={} & (-1)^{\chi(\gamma(m))-1} \chi(\gamma(m)) \VW_{\gamma}\,.
\end{align*}
\end{conjecture-definition}

Refined invariants have been defined in \cite{T} by the K-theoretic virtual localisation formula \cite{Qu,CK}. For notation and definitions, see \cite{T}.

\begin{conjecture-definition}\textup{\cite{T}}\label{ConDefRef}
There exists a rational function $\VW_{\gamma}(t)$ in $\sqrt{t}$, called the \emph{refined Vafa-Witten invariant} of \((S,H, \gamma^\perp)\), such that for all $m\gg 0$, we have
\begin{align*}
\chi_t\left(\P^\perp_\gamma(m),
\hat\O^\vir_{\P^\perp_\gamma(m)} \right)
	\coloneqq{} & \chi_t\left(\left(\P^\perp_\gamma(m)\right)^{\CC^*},
	\frac{\O^\vir_{\left(\P^\perp_\gamma(m)\right)^{\CC^*}}}
	{\Lambda^\bullet(N^\vir)^\vee}
	\otimes	\left.K^\frac{1}{2}_{\P^\perp_\gamma(m), \vir}\right|_{\left(\P^\perp_\gamma(m)\right)^{\CC^*}} \right) \\
	={} & (-1)^{\chi(\gamma(m))-1} [\chi(\gamma(m))]_t \VW_{\gamma}(t)\,,
\end{align*}
where
\[
[\chi(\gamma(m))]_t \coloneqq \frac{ t^{\chi(\gamma(m))-1} + \ldots + t + 1}{t^\frac{\chi(\gamma(m))-1}{2}}
\]
denotes the \emph{quantum integer}.
\end{conjecture-definition}

\subsection{The fixed locus} \label{SubSecFixed}
Following \cite{GSY2}, the fixed locus \((\P^\perp)^{\CC^*}\) can be written as a disjoint union of open and closed subschemes as follows. Let \((\E,s)\in (\P^\perp)^{\CC^*}\) be a \(\CC^*\) fixed Joyce-Song pair. By Lemma~\ref{LemmaCanonES} below, we have a canonical weight space decomposition
\[E = q_* \E = E^0 \oplus \ldots \oplus E^{-k}\,.\]
We will write
\[\lambda_{(\E,s)} \coloneqq (\rk E^0, \ldots, \rk E^{-k})\,\]
for the vector of ranks. We now have, cf.\ loc.\ cit., a decomposition
\[(\P^\perp)^{\CC^*} = \coprod_{|\lambda| = r} \P^\perp_\lambda\,,\]
where \(\P_\lambda^\perp\) is the open and closed subscheme defined by
\[\lambda_{(\E,s)} = \lambda\]
for an ordered partition \(\lambda = (\lambda_0, \ldots, \lambda_k) \) of \(r\). We will study the contribution of the locus \(\P^\perp_{1^r} = \P^\perp_{(1,\ldots,1)}\) of \emph{vertical} Joyce-Song pairs to the Vafa-Witten invariants.

Using \cite{GSY} and \cite{GT2}, we will construct the schemes \(\P^\perp_{1^r}\) and their virtual classes directly in Sections~\ref{SecConstruction}-\ref{SecLocal}.

\subsection{Results}\label{SubsectionResults}
Similar to Conjecture-Definition~\ref{ConDefUnref}, we can define \emph{vertical} contributions to the Vafa-Witten invariant by
\[\int_{\left[\P^\perp_{1^r}\right]^\vir} \frac{1}{e(N^\vir)} = (-1)^{\chi(\gamma(m))-1} \chi(\gamma(m)) \VW_{\gamma}^\vertical\]
for \(m\gg0\) (note that the left hand side depends implicitly on \(m\)).
The vertical contribution \(\VW_\gamma^\vertical(t)\) to the refined invariant can be defined by
\begin{align*}
\chi_t\left(\P^\perp_{1^r},
	\frac{\O^\vir_{\P^\perp_{1^r}}}
	{\Lambda^\bullet(N^\vir)^\vee}
	\otimes	\left.K^\frac{1}{2}_{\P_\gamma^\perp(m), \vir}\right|_{\P^\perp_{1^r}} \right)
	= (-1)^{\chi(\gamma(m))-1} [\chi(\gamma(m))]_t \VW_{\gamma}^\vertical(t)
\end{align*}
for \(m\gg0\).

\begin{result}\label{ResultA}
Let $S$ be a surface with $p_g(S)>0$, with a polarisation \(H\) and \(\gamma^\perp = (r,L,c_2)\) given as above. The vertical contributions
\[\VW_{\gamma}^\vertical \quad\text{and}\quad \VW_{\gamma}^\vertical(t)\]
to the (refined) Vafa-Witten invariant of \((S,H,\gamma^\perp)\) are well-defined.
\end{result}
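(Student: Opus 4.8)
The plan is not to rely on the conjectural existence of the full pair space $\P^\perp$, but to work directly with the model of $\P^\perp_{1^r}$ constructed in Sections~\ref{SecConstruction}--\ref{SecLocal}, which is produced together with its virtual class, its virtual normal bundle $N^\vir$ and its square root $K^{\frac12}_{\vir}$. The first step is to lay bare the geometry of this model. There is a projective morphism $\rho\colon\P^\perp_{1^r}\to\fN$ onto a \emph{vertical sheaf moduli} $\fN$, where $\fN$ is a disjoint union — indexed by the combinatorial types of the weight decomposition $q_*\E=E^0\oplus\cdots\oplus E^{-(r-1)}$ into rank one pieces — of total spaces of the relevant Higgs-field Hom-sheaves over products $\prod_j\Hilb^{n_j}(S)$ of Hilbert schemes of points on $S$. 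For $m\gg0$, Serre vanishing in this bounded family makes the sheaf of Joyce--Song sections locally free of constant rank on each component, so that $\rho$ is a projective bundle, $\P^\perp_{1^r}\cong\PP(\V_m)$; moreover the perfect obstruction theory, $N^\vir$ and $K^{\frac12}_{\vir}$ all split, relatively over $\rho$, into a part pulled back from $\fN$ and a relative part governed by the pair deformations $R\Hom_X(\O_X(-m),\E)\simeq R\Gamma(S,E(m))$ and their Serre dual $R\Hom_X(\E,\O_X(-m))\simeq R\Hom_X(\O_X(-m),\E)^\vee[-3]$. The one subtlety to record here is that, although $K_X\cong\O_X$, this line bundle carries a non-trivial $\CC^*$-character because $\CC^*$ scales the fibres of $X\to S$, so Serre duality introduces a weight shift into the relative obstruction.

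The second step is to push the defining integrals forward along $\rho$ and read off the dependence on $m$. Over a fibre $\PP(\V_m)$ the moving part $N^\vir$ is, in $K$-theory, the pair deformations twisted by $\O(1)$ minus their Serre dual twisted by $\O(-1)$ and by the $K_X$-character, plus an $h$-independent part pulled back from $\fN$ (here $h=c_1(\O(1))$). Hence $\int_{\PP(\V_m)/\fN}e(N^\vir)^{-1}$ is the coefficient of the top power of $h$ in a product of linear forms $(jt-h)$ divided by a product of linear forms $(h+it)$, with the exponents the numerical invariants $\chi(E^i(m))$; a short residue computation shows that, after the obvious cancellations, this telescopes to $(-1)^{\chi(\gamma(m))-1}\chi(\gamma(m))$ times a class pulled back from $\fN$. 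This is the Joyce--Song projective-bundle mechanism: rigidifying by a Joyce--Song section multiplies the intrinsic vertical invariant by $(-1)^{\chi(\gamma(m))-1}\chi(\gamma(m))$. The same computation carried out $K$-theoretically, carrying $K^{\frac12}_{\vir}$ along, turns this factor into $(-1)^{\chi(\gamma(m))-1}[\chi(\gamma(m))]_t$. One then \emph{defines} $\VW_\gamma^\vertical$ (respectively $\VW_\gamma^\vertical(t)$) to be the residual integral over $\fN$; as $\fN$ and the residual integrand involve only tautological classes on the Hilbert schemes and the Higgs data — and not $m$ — these are well defined, which is precisely Theorem~\ref{ResultA}. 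That they coincide with the tautological integrals of \cite{L} is then immediate, $\fN$ being the moduli space studied there.

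The step I expect to be the main obstacle is justifying the inputs to all of the above: that over each component $\PP(\V_m)$ the universal family of Joyce--Song pairs genuinely consists of \emph{stable} pairs, so that the virtual localization formula applies to it, and that the obstruction theory of the explicit model really is the $\CC^*$-fixed part of the Vafa--Witten obstruction theory $R\Hom_X(I^\bullet,I^\bullet)_\perp$, respecting the traceless reduction and the zero-centre-of-mass condition. Here $p_g(S)>0$ enters: following Kiem and Li, a non-zero section of $K_S$ induces a cosection of the obstruction sheaf of the family, and I would show that stability of the universal family of vertical pairs is equivalent to a non-degeneracy property of this cosection, which holds for $m\gg0$. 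The delicate points are reconciling the Gieseker (GIT) semistability of $\E$ along the strictly semistable locus with the cosection criterion, verifying that no component of $(\P^\perp)^{\CC^*}$ is lost or double-counted in passing to the explicit model, and checking that the two a priori different virtual classes — the intrinsic one and the one coming from the construction — agree, so that the two sides of the localization formula match.
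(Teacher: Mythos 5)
Your overall strategy is the same as the paper's: replace the vertical fixed locus by an explicit projective-bundle model over Higgs/Hilbert-scheme data, push the localized integrand down the bundle to extract the factor $(-1)^{\chi(\gamma(m))-1}\chi(\gamma(m))$ (resp.\ $[\chi(\gamma(m))]_t$), and define $\VW^\vertical_\gamma$ by the residual, $m$-independent integral, which is then matched with the tautological integrals of \cite{L}. But there is a genuine gap precisely at the step you flag, and the mechanism you propose for closing it would fail. First, $\P^\perp_{1^r}$ is \emph{not} the full bundle $\PP(\V_m)$: a $\CC^*$-fixed Joyce--Song pair has its section in a single weight piece of $H^0(E(m))=\bigoplus_i H^0(E^{-i}(m))$, so the genuine vertical locus is (an open and closed part of) the disjoint union of the sub-bundles $\PP_i=\PP(\pi_{S*}E^{-i}_m)$, not their joint projectivization. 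More decisively, when the underlying sheaf is strictly semistable (all $\alpha_i$ and $n_i$ equal --- the case the theorem is really about), only the top-weight component $\PP_0$ consists of stable pairs: the pairs parametrized by $\PP_i$ with $i>0$ are unstable for \emph{every} $m$, so no choice of $m\gg0$ and no ``non-degeneracy of a cosection'' can make the universal family stable there. Consequently the identification of the true contribution (an integral over $\PP_0$ alone) with the full fibrewise residue over all weights --- which is exactly what produces the clean factor $\chi(\gamma(m))$ and hence $m$-independence --- is the unproven point, and it is the entire content of Theorem~\ref{ResultA} in the semistable case.

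What is actually needed, and what the paper proves (Theorem~\ref{TheCasesJS}), is a dichotomy rather than stability: for each $i$, either $(\E(1),s)|_{\PP_i}$ is fibrewise stable, or $[\PP_i]^\vir=0$. The vanishing has nothing to do with $m$: the cosection $\sigma_i\colon ob_{M_\alpha^n}\to R^2\pi_{S*}\O_S$ of \eqref{EqCosections} is surjective away from the locus $\{E^{-(i-1)}\cong E^{-i}\}$, so by Kiem--Li \cite{KL} the class $[M_\alpha^n]^\vir$ localizes to that locus, where $\iota^{\prime*}p_i^*\pi_{S*}E_m^{-(i-1)}(1)=\iota^{\prime*}T_{\PP_i/M_\alpha^n}+\O$ contains a trivial summand and the Euler class in \eqref{EqCapEul} kills $[\PP_i]^\vir$. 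Only with this vanishing may one harmlessly add the unstable components back in and then compute over the compact bundle $\PP$ by the projection formula (or your residue), as in Section~\ref{SecUnref}. Two smaller points: $\P^\perp$ is not conjectural (only the $m$-independence of the invariants is), so there is nothing to avoid there; and the comparison of the model's obstruction theory, virtual class and $N^\vir$ with the $\CC^*$-fixed part of $R\Hom(I^\bullet,I^\bullet)_\perp$, which you defer, is a genuinely required ingredient --- in the paper it is carried out via \eqref{EqCompareVirTan} (following \cite[Proposition~6.8]{TT2}) together with Siebert's formula, giving Proposition~\ref{PropVirEqStab}; your Serre-duality-with-$\t$-weight description is consistent with this but still has to be executed.
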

%

Let $(S,H)$ be a polarised surface with $H^1(\O_S) = 0$ and $p_g(S)>0$. Let $\gamma$ be a charge for which any semistable sheaf on $X$ of type $\gamma$ is stable. In \cite{L}, we have seen that the vertical contributions to the Vafa-Witten invariant of \((S,H,\gamma)\) can be expressed in terms of the coefficients of universal Laurent series \(A, B, C_{ij} \in \QQ(\!(q^\frac{1}{2r})\!)\) for \(1\leq i\leq j < r\) and Seiberg-Witten invariants \(\SW(\beta^i)\) of classes \(\beta^i \in H^2(S,\ZZ)\). The Laurent series are in turn defined by certain tautological integrals over products of Hilbert schemes of points on \(S\).

We will show that the tautological integrals compute the Vafa-Witten invariants for any surface $S$ with $p_g(S)>0$ and any $\gamma^\perp$. More precisely, consider the following generating series
\[\Z_{S,r,L}(q) = \frac{q^{\frac{1-r}{2r} c_1(L)^2}}{\#\Pic(S)[r] } \, \sum_{c_2\in\ZZ} \VW_{(r,L,c_2)}^\vertical\, q^{c_2}\,,\]
where \(\#\Pic(S)[r]\) is the $r$-torsion of the Picard group of \(S\). Then the series \(A,B,C_{ij}\) in the following theorem, or rather their refined counterparts, are precisely the ones of \cite[Theorem A]{L}.
\begin{result}\label{ResultB}
Fix a rank \(r\geq 1\). There exist universal Laurent series
\[A,B,C_{ij} \in \QQ(\!(q^\frac{1}{2r})\!) \,, \quad 1\leq i \leq j < r\,,\]
depending only on \(r\), such that for any surface \(S\) with \(p_g(S)>0\), and any line bundle $L$ on $S$, we have
\[
\Z_{S,r,L}(q) = 
	A^{\chi(\O_S)}
	B^{K_S^2} \,
\sum_\beta
	 \SW(\beta^1)\cdots\SW(\beta^{r-1})
	\prod_{i\leq j}C_{ij}^{\beta^i\beta^j}
\]
where the sum is taken over classes \(\beta^1,\ldots,\beta^{r-1}\in H^2(S,\ZZ)\) with
\[c_1(L) \equiv \sum_i i\beta^i \mod rH^2(S,\ZZ)\,.\]
The same statement holds for generating series of vertical contributions to refined Vafa-Witten invariants, when one allows the Laurent series to have coefficients in \(\QQ(\sqrt t)\).
\end{result}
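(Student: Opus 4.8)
The plan is to reduce the statement for arbitrary $\gamma^\perp$ to the stable case treated in \cite[Theorem A]{L} by a two-step argument: first express the vertical contribution in terms of tautological integrals over Hilbert schemes, then invoke a Grothendieck-Riemann-Roch / universal-function argument to extract the $A^{\chi(\O_S)}B^{K_S^2}\prod C_{ij}^{\beta^i\beta^j}$ shape. The starting point is the explicit description of $\P^\perp_{1^r}$ and its virtual class obtained in Sections~\ref{SecConstruction}-\ref{SecLocal}: a vertical Joyce-Song pair has $q_*\E = E^0\oplus\cdots\oplus E^{-(r-1)}$ with each $E^{-i}$ of rank one, so $E^{-i} = M_i\otimes\I_{Z_i}$ for a line bundle $M_i$ and a length-$n_i$ subscheme $Z_i\subset S$, and the scheme structure is cut out inside a product $\prod_i S^{[n_i]}$ of Hilbert schemes of points by a section of an explicit bundle built from $\EExt$ sheaves of the universal ideal sheaves twisted by $K_S$. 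The cosection argument promised in the abstract (via Kiem-Li) shows the localized virtual class is supported on, and computed by, the Seiberg-Witten-type loci where the relevant $H^0(K_S\otimes(M_i M_j^{-1}))$ obstructions vanish, which is exactly how the $\SW(\beta^i)$ enter and how the constraint $c_1(L)\equiv\sum_i i\beta^i \bmod r$ arises from $\det(q_*\E)\cong L$.

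Next I would write the integral $\int_{[\P^\perp_{1^r}]^\vir}\tfrac{1}{e(N^\vir)}$ as a sum over tuples $(M_1,\dots,M_{r-1})$ (equivalently over $\beta^i = c_1(M_i)$-type data) and over $(n_1,\dots,n_{r-1})$, of a tautological integral over $\prod_i S^{[n_i]}$ of a characteristic class of tautological bundles — the same integrand as in \cite{L}, since the local model of the obstruction theory is identical whether or not $\E$ happens to be stable (stability only affected which pairs appear, not the virtual geometry near a vertical fixed point). Summing over $c_2$ converts the $\prod_i S^{[n_i]}$-integrals into a product over $i$ of generating series in $q$ whose coefficients are tautological integrals over $S^{[n]}$; the normalization by $q^{(1-r)c_1(L)^2/(2r)}$ and $\#\Pic(S)[r]$ in $\Z_{S,r,L}$ is precisely what is needed to package the Picard-group sum and the $c_1(L)^2$-dependence cleanly, matching the conventions of \cite{L}.

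The final step is the universality: a tautological integral over a product of Hilbert schemes of points on $S$, of a polynomial in Chern classes of tautological bundles associated to line bundles $M_i$ and to $K_S$ and $\O_S$, depends on $S$ only through the "cobordism-type" invariants $\chi(\O_S)$, $K_S^2$, and the pairwise intersection numbers $\beta^i\beta^j$, $\beta^i K_S$, $\beta^i c_1(L)$ — this is the Ellingsrud-Göttsche-Lehn / Li-Qin-Wang principle, already used in \cite{L}. One shows the generating series is multiplicative in these variables, so it has the form $A^{\chi(\O_S)}B^{K_S^2}\prod_{i\le j}C_{ij}^{\beta^i\beta^j}$ with $A,B,C_{ij}\in\QQ(\!(q^{1/2r})\!)$ universal (the $\beta^i K_S$ and $\beta^i c_1(L)$ dependence is absorbed by adjunction and the mod-$r$ constraint into the $C_{ij}$, exactly as in \cite[Theorem A]{L}), and one identifies these series with the stable-case ones by evaluating on any surface $S$ with $p_g(S)>0$ for which all semistable sheaves of the relevant charges are stable — such surfaces exist, and on them both sides are given by the same tautological integrals, forcing equality of the universal series. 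For the refined statement, one repeats the argument with $K$-theoretic virtual localization in place of the cohomological one: the local model, the cosection support argument, and the Hilbert-scheme reduction all have $K$-theoretic analogues (cf.\ \cite{T,L}), producing universal series with coefficients in $\QQ(\sqrt t)$.

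I expect the main obstacle to be the first step — proving that $\P^\perp_{1^r}$, for a charge $\gamma^\perp$ admitting strictly semistable sheaves, still carries the expected scheme structure and symmetric/perfect obstruction theory realized by the explicit Hilbert-scheme model, and that its virtual class agrees with the restriction of the intrinsic one. This is where strict semistability genuinely interferes: one must control the Joyce-Song rigidification, check that the section $s$ does not destabilize, and verify (via the Kiem-Li cosection localization) that the obstruction-sheaf cosection coming from $p_g(S)>0$ behaves on the vertical locus exactly as in the stable case. Once that local comparison is in hand, the reduction to \cite{L}'s tautological integrals and the universality argument are essentially formal.
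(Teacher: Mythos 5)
Your proposal follows essentially the same route as the paper: reduce the vertical contribution to the tautological integrals over products of Hilbert schemes of points already appearing in \cite{L} (via the nested-Hilbert-scheme model of \(\P^\perp_{1^r}\) and the Kiem--Li cosection argument, which shows that wherever strict semistability interferes the virtual class vanishes), and then invoke the universality/multiplicativity of such integrals in \(\chi(\O_S)\), \(K_S^2\) and the intersection numbers \(\beta^i\beta^j\). The only cosmetic difference is that you propose pinning down the universal series by evaluating on surfaces where stable equals semistable, whereas the paper observes directly that the resulting integral is literally \cite[Equation~4.4]{L} for every surface with \(p_g(S)>0\), so no such matching step is needed.
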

\begin{remark}
In particular \(\VW_\gamma^\vertical\) and \(\VW_\gamma^\vertical(t)\) do not depend on the polarisation \(H\), or on the lift \(\gamma^\perp\) of \(\gamma\).
\end{remark}

\subsection{Acknowledgement}
I thank Amin Gholampour, my Ph.D. advisor Martijn Kool, and Richard Thomas for useful discussions. I thank Richard Thomas for hosting me at Imperial College, where part of the work presented here was done.

\section{A tautological family of Joyce-Song pairs} \label{SecConstruction}
Recall that via the spectral construction (see e.g.\ \cite{TT}), a sheaf $\E$ on $X$ can be viewed as Higgs pair \((E,\phi)\) on \(S\), where \(E = q_* \E\) is a sheaf on \(S\), and \(\phi\colon E\rightarrow E\otimes \omega_S\) is a map that encodes the \(\O_X\)-module structure of \(\E\). In \cite{L}, we have studied families of Higgs pairs that are flags of sheaves of rank one. Such families form (\'etale covers of) nested Hilbert schemes. We will equip the Higgs pairs, or their corresponding sheaves on $X$, with a Joyce-Song section. The resulting Joyce-Song pairs will form a projective bundle over the space of Higgs pairs.

Choose classes $\alpha_0,\ldots, \alpha_s \in H^2(S,\ZZ)$ and a line bundle \(L\) on \(S\) with
\[c_1(L) = \alpha_0+\ldots+\alpha_s\,.\]
Define classes
\begin{align*}
\beta_1
	& = \alpha_1-\alpha_0 + c_1(\omega_S) \\
	& \vdotswithin{=} \\
\beta_{s}
	& = \alpha_{s} - \alpha_{s-1} + c_1(\omega_S) \,,
\end{align*}
and write \(S_{\beta_i}\) for the Hilbert scheme of curves on \(S\) with class \(\beta_i\).
Let $M_\alpha$ be the limit of the following (solid) diagram:
\[
\begin{tikzcd}
M_\alpha \arrow[dd,dotted] \arrow[rr, dotted] \arrow[dr, dotted]& & S_{\beta_1} \times \cdots \times S_{\beta_s} \arrow[d] \\
& \Pic_{\alpha_0}(S) \times \cdots \times \Pic_{\alpha_s}(S) \arrow[r, "{\partial}"] \arrow[d,"{\det}"] & \Pic_{\beta_1}(S) \times \cdots \times \Pic_{\beta_{s}}(S) \\
{[L]} \arrow[r] & \Pic_{\alpha_0 + \ldots + \alpha_s}(S) &
\end{tikzcd}
\]
where the map \(\det\) is given by the rule
\[(L_0,\ldots,L_s) \mapsto L_0\otimes\cdots \otimes L_s\,,\]
and the map \(\partial\) by
\[(L_0,\ldots,L_s) \mapsto (L_0^* \otimes L_1 \otimes \omega_S, \ldots, L_{s-1}^* \otimes L_s \otimes \omega_S)\,.\]
Then \(M_\alpha\) parametrizes sums of line bundles
\[L_0\oplus \ldots \oplus L_s\,,\]
with constant determinant \(L\), together with non-zero maps
\begin{equation}\label{EqMapsPhi}
\phi_i \colon  L_{i-1}
	 \rightarrow L_i \otimes \omega_S \quad \text{for } i = 1,\ldots,s \,.
\end{equation}

For non-negative integers \(n_0,\ldots,n_s\), let \(S^{[n_i]}\) be the Hilbert scheme of \(n_i\) points on \(S\). We will write
\[S_\beta^{[n]} = S_{\beta_1,\ldots,\beta_s}^{[n_0,\ldots,n_s]} \hookrightarrow S^{[n_0]}\times \cdots \times S^{[n_s]} \times  S_{\beta_1}\times \cdots \times S_{\beta_{s}}\]
for the nested Hilbert scheme, i.e.\ the subscheme defined by the rule
\[I_{i-1}(-C_i) \subset I_i \,, \quad i = 0,\ldots,s\]
for ideal sheaves \(I_i \in S^{[n_i]}\) and curves \(C_i \in S_{\beta_i}\).
Let \(M_\alpha^n\) be the fibre product
\[
\begin{tikzcd}
M_\alpha^n \arrow[d,dotted] \arrow[r,dotted] & S_\beta^{[n]} \arrow [d] \\
M_\alpha \arrow[r] & S_{\beta_1}\times \cdots \times S_{\beta_{s}} \,.
\end{tikzcd}
\]
Then \(M_\alpha^n\) parametrizes Higgs pairs \((E,\phi)\) on \(S\), given by a sheaf \(E\) with \(\det (E) \cong L\) of the form
\[E = (L_0\otimes I_0) \oplus \ldots \oplus (L_s \otimes I_s)\,,\]
with line bundles \(L_i \in \Pic_{\alpha_i}(S)\) and ideal sheaves \(I_i \in S^{[n_i]}\) for \(i = 0,\ldots,s\), together with maps as in \eqref{EqMapsPhi}, that factor through the ideal sheaves:
\[
\begin{tikzcd}
L_0 \arrow[r, "\phi_1"] & L_1 \otimes \omega_S \arrow[r, "\phi_2"] & \cdots \arrow[r, "\phi_s"] & L_s \otimes \omega_S^{\otimes s} \\
L_0\otimes I_0 \arrow[r, dotted, "\phi_1"] \arrow[u] & L_1\otimes I_1 \otimes \omega_S \arrow[r, dotted, "\phi_2"] \arrow[u] & \cdots \arrow[r, dotted, "\phi_s"] & L_s\otimes I_s \otimes \omega_S^{\otimes s} \arrow[u] \,.
\end{tikzcd}
\]

\begin{remark}\label{RemTorsionFree}
A torsion-free sheaf of rank one on \(S\) can be uniquely written as \(I\otimes L\), with \(I\) an ideal sheaf of a finite subscheme of \(S\), and \(L\) a line bundle. Writing
\[E^{-i} = L_i\otimes I_i \,, \quad i=0,\ldots,s\,,\]
it follows that we can view \(M_\alpha^n\) is a moduli space of graded sums of rank one torsion-free sheaves
\[E = E^0 \oplus \ldots \oplus E^{-s}\]
on \(S\), with a homogeneous Higgs field \(\phi\colon E \rightarrow E\otimes \omega_S\) of weight \(-1\) and rank \(s\).
\end{remark}

We choose \emph{a} universal Higgs pair \((E,\phi)\) on \(M_\alpha^{n} \times S\), which is only unique up to twists by elements of \(\Pic (M_\alpha^{n})\). Let \(\E\) be the sheaf on \(M_\alpha^{n} \times X\) corresponding to \((E,\phi)\).

Fix an integer \(m\gg0\) and write
\[E_m = E \otimes \O_S(m\cdot H)
\quad\text{and}\quad
\E_m = \E \otimes \O_X(m \cdot q^*H)\,.
\]
Let
\[\pi_S\colon M_\alpha^n \times S \rightarrow M_\alpha^n \,, \quad \pi_X\colon M_\alpha^n \times X \rightarrow M_\alpha^n\]
denote the projections. Define a projective bundle
\begin{align*}
\PP \coloneqq{}
	&  \PP(\pi_{X*}\E_m)\\
={}
	& \PP(\pi_{S*}E_m) \,.
\end{align*}
with projection map
\[p\colon \PP \rightarrow M_\alpha^{n}\]
and canonical line bundle \(\O_\PP(1)\). The tautological section
\[s\colon \O_{\PP\times X} \rightarrow (p\times \mathrm{id}_X)^*\E_m \otimes \O_\PP(1)\,,\]
defines a universal family of Joyce-Song pairs on \(\PP\times X\), which we will denote by
\begin{equation}\label{EqUnivJS}
(\E(1), s)\,.
\end{equation}

\section{The virtual class}\label{SectionVirtualClass}
Note that \(M_\alpha \rightarrow S_{\beta_1}\times \cdots \times S_{\beta_{s}} \) is an surjective \'etale morphism of degree \((s+1)^{2\cdot q(S)}\), with
\[q(S) = h^{0,1}(S) = \dim H^1(S,\O_S)\]
the irregularity of \(S\). In fact, it is a torsor under the torsion subgroup
\[(\ZZ/ (s+1) \ZZ )^{2 \cdot q(S)} \cong\Pic_0(S)[s+1] \subset \Pic_0(S)\,,\]
which acts on \(M_\alpha\) by the rule
\[N \cdot (E, \phi) = (E \otimes N, \phi)\]
for \(N\in \Pic_0(S)\) with with \(N^{\otimes(s+1)} \cong \O_S\) and \((E, \phi) \in M_\alpha\). The same holds for
\[\eta \colon M_\alpha^n \rightarrow S_\beta^{[n]} \,.\]
In particular \(M_\alpha^n\) carries a perfect obstruction theory, which is simply the pull-back of the perfect obstruction theory on \(S_\beta^{[n]}\) considered in \cite{GSY} and \cite{GT2}. Note that we have
\begin{align*}
\eta^* [S_\beta^{[n]}]^\vir
	& = [M_\alpha^n]^\vir \quad \text{and}\\
\eta_* [ M_\alpha^n]^\vir 
	& = (s+1)^{2\cdot q(S)} \cdot [S_\beta^{[n]}]^\vir \,.
\end{align*}

Let \(T_{M_\alpha^n}\) denote the virtual tangent bundle of \(M_\alpha^n\), i.e.\ the class in \(K^0(M_\alpha^n)\) of the dual of its perfect obstruction theory.

\begin{proposition}\label{PropVirPre}
The scheme \(\PP\) carries a perfect obstruction theory with virtual tangent bundle
\[T_\PP = p^*T_{M_\alpha^n} + T_{\PP/M_\alpha^n} - T_{\PP/M_\alpha^n}^\vee\,.\]
\end{proposition}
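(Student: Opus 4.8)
The plan is to read the obstruction theory off from the universal Joyce--Song pair $(\E(1),s)$ of \eqref{EqUnivJS}, and then to identify its virtual tangent bundle by an explicit $K$-theory computation, combining the relative Euler sequence of $p$, Serre duality on $X$, and the spectral correspondence (see \cite{TT}). Throughout I write $\pi_X$ also for the projection $\PP\times X\to\PP$, which sits over $\pi_X\colon M_\alpha^n\times X\to M_\alpha^n$ via $p$.

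First I would invoke the deformation theory of Joyce--Song pairs: by \cite{JS,TT2}, the complex $I^\bullet=[\O_{\PP\times X}(-m)\xrightarrow{s}\E(1)]$ on $\PP\times X$ induces a morphism
\[
\RHom_{\pi_X}(I^\bullet,I^\bullet)_\perp[1]\longrightarrow\LL_\PP\,,
\]
with $\RHom_{\pi_X}(I^\bullet,I^\bullet)_\perp$ the summand of the decomposition recalled in Section~\ref{SecModuliSpace}. Since $X$ has trivial canonical bundle and $\E(1)$ has compact support over $S$, relative Serre duality on $X$ identifies $\RHom_{\pi_X}(I^\bullet,I^\bullet)_\perp$ with its own derived dual, up to the shift $[-3]$ and a $\CC^*$-weight twist; combined with the stability of the pairs, which kills the $\Hom$ of the $\perp$-part and hence, dually, its $\Ext^3$, this confines the complex to degrees $[1,2]$. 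So the displayed morphism is a perfect obstruction theory, and $T_\PP=-[\RHom_{\pi_X}(I^\bullet,I^\bullet)_\perp]$ in $K^0(\PP)$.

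To compute this class I would write $[I^\bullet]=[\O_{\PP\times X}(-m)]-[\E(1)]$ and expand. The term $\RHom_{\pi_X}(\O_{\PP\times X}(-m),\O_{\PP\times X}(-m))=R\pi_{X*}\O_{\PP\times X}$ is the constant complex $H^*(\O_X)\otimes\O_\PP$ and cancels the $H^*(\O_X)$-summand of the $\perp$-decomposition. As $\E(1)=(p\times\mathrm{id}_X)^*\E\otimes\O_\PP(1)$, the $\O_\PP(1)$-twists cancel in the term $\RHom_{\pi_X}(\E(1),\E(1))=p^*\RHom_{\pi_X}(\E,\E)$. For the cross terms, $\RHom_{\pi_X}(\O_{\PP\times X}(-m),\E(1))=p^*(R\pi_{X*}\E_m)\otimes\O_\PP(1)$, which for $m\gg0$ is the bundle $p^*(\pi_{X*}\E_m)\otimes\O_\PP(1)$, while by relative Serre duality on $X$ the term $\RHom_{\pi_X}(\E(1),\O_{\PP\times X}(-m))$ is, up to the shift $[-3]$, the derived dual of the former and so contributes $-p^*(\pi_{X*}\E_m)^\vee\otimes\O_\PP(-1)$ in $K^0(\PP)$. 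Feeding these into the relative Euler sequence
\[
0\longrightarrow\O_\PP\longrightarrow p^*(\pi_{X*}\E_m)\otimes\O_\PP(1)\longrightarrow T_{\PP/M_\alpha^n}\longrightarrow0
\]
identifies the combined contribution of the two cross terms with $[T_{\PP/M_\alpha^n}]-[T_{\PP/M_\alpha^n}^\vee]$; these summands are exactly the deformations of the section $s$ along the fibres of $p$ and their Serre-dual obstructions.

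Collecting the above, and using that $p^*$ is injective, the proposition reduces to the identity
\[
-[\RHom_{\pi_X}(\E,\E)]+[H^{\geq1}(\O_S)]-[H^{\leq1}(K_S)]=[T_{M_\alpha^n}]\qquad\text{in }K^0(M_\alpha^n)\,.
\]
To prove this I would apply the spectral correspondence, which gives
\[
\RHom_{\pi_X}(\E,\E)\;\cong\;\big[\RHom_{\pi_S}(E,E)\xrightarrow{\ \phi\circ(-)-(-)\circ\phi\ }\RHom_{\pi_S}(E,E\otimes\omega_S)\big]\,,
\]
decompose the right-hand side by $\CC^*$-weight using $E=\bigoplus_iE^{-i}$ together with the fact that $\phi$ is homogeneous of weight $-1$ (Remark~\ref{RemTorsionFree}), and then match the resulting class — corrected by $[H^{\geq1}(\O_S)]$ and $[H^{\leq1}(K_S)]$, which effects the passage from the full determinant to the fixed determinant $L$ when $p_g(S)>0$ — with the pull-back under $\eta$ of the nested Hilbert scheme obstruction theory of \cite{GSY,GT2} recalled above, i.e.\ with $T_{M_\alpha^n}$. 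This last comparison, between the spectral description of the deformations of the graded Higgs pair $(E,\phi)$ and the Gholampour--Sheshmani--Yau/Gholampour--Thomas description of the nested Hilbert scheme $S_\beta^{[n]}$, is the main obstacle: it requires careful bookkeeping of the $\CC^*$-weights and of the correction terms, and for $m\gg0$ it is in essence the computation carried out in \cite{L}, the point here being that it goes through without assuming that the semistable sheaves involved are stable.
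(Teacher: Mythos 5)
Your proposal has a genuine gap at its first step, and the identity it reduces to at the end is false. The complex $\RHom_{\pi_X}(I^\bullet,I^\bullet)_\perp[1]$ attached to the universal family on $\PP\times X$ does \emph{not} define a perfect obstruction theory on $\PP$: the results of \cite{JS,TT2} you invoke apply to the moduli space of \emph{stable} pairs, whereas $\PP$ is merely a parameter space for a particular family of split (``vertical'') objects. It contains strictly unstable Joyce--Song pairs (this is the whole point of the construction, cf.\ Theorem~\ref{TheCasesJS} and Section~\ref{SecUnref}), so your fibrewise vanishing arguments via stability do not apply; and even on the stable locus the classifying map $\PP^{\mathrm{stab}}\to\P^\perp$ is not \'etale, since a pair $(\E_x,s_x)$ has first-order deformations that destroy the split structure and are not tangent to $\PP$. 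Hence the map $h^0$ of your proposed theory to $\Omega_\PP$ is not an isomorphism. A quick sanity check exposes the same problem numerically: the rank of $-\RHom_{\pi_X}(I^\bullet,I^\bullet)_\perp$ is the virtual dimension of $\P^\perp$, while the class in the proposition has rank equal to $\mathrm{vd}(M_\alpha^n)$. Concretely, by the computation carried out later in the paper (equation \eqref{EqCompareVirTan}) one has $-\RHom_{\pi_X}(I^\bullet,I^\bullet)_\perp = p^*\!\left(-\RHom_{\pi_X}(\E,\E)_\perp\right) + T_{\PP/M_\alpha^n} - T_{\PP/M_\alpha^n}^\vee\otimes\t$, and $T_{M_\alpha^n}$ is only the $\CC^*$-\emph{fixed} part of $-\RHom_{\pi_X}(\E,\E)_\perp$; the moving part is the virtual normal bundle $N_\alpha^n$, which is nonzero --- it is precisely what gets inverted in the localisation formulas of Sections~\ref{SecUnref}--\ref{SecIntegrals}. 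So your final identity $-[\RHom_{\pi_X}(\E,\E)]+[H^{\geq1}(\O_S)]-[H^{\leq1}(K_S)]=[T_{M_\alpha^n}]$ fails by exactly $N_\alpha^n$, and $-\RHom_{\pi_X}(I^\bullet,I^\bullet)_\perp$ is not the virtual tangent bundle asserted in the proposition (only its fixed part, restricted to the fixed loci, reappears in \eqref{EqObstructionPi}).

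The paper's proof avoids pair deformation theory altogether. Following \cite[Sections 4--5]{GT2}, it uses the presentation of $S_\beta^{[n]}$ as the zero locus of a section $s$ of $F(1)$ on an open subscheme $U\subset\PP(B)$, produces a Cartesian square with a smooth morphism $\overline{\PP}\to\PP(B)$ (a projective bundle over an \'etale cover $Y$ of $\PP(B)$, built from a sheaf $\overline{E}$ extending $E$ via the identity \eqref{EqRestricsToCanon}), and then exhibits $\PP$ as the zero locus in $\overline{\PP}\times_{\PP(B)}U$ of the section $(\pr^*s,0)$ with values in $\pr^*F(1)\oplus T^\vee_{\overline{\PP}/\PP(B)}$; the artificially added zero component is what produces the $-T_{\PP/M_\alpha^n}^\vee$ term. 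If you want a deformation-theoretic route, you must work with the $\CC^*$-fixed part of the pair theory on the fixed loci --- which is how the paper later compares with $\P^\perp$ --- but that comparison presupposes that the obstruction theory on $\PP$ has already been constructed by such a geometric argument.
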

\begin{proof}
We will give the proof for rank two. The general case follows directly from the techniques of \cite[Section 5]{GT2}. In Section 4.3 of loc.cit., a vector bundle
\[B \rightarrow \Pic_{\beta}(S) \times S^{[n_0]} \times S^{[n_1]}\,\]
is constructed, together with an open subscheme \(U\subset \PP(B)\) and a vector bundle \(F\) on \(U\), such that \(S_\beta^{[n_0,n_1]} \subset U\)
is the zero locus of a section
\[ s\colon \O_U \rightarrow F(1)\,, \]
where we have twisted \(F\) by (the restriction of) the canonical line bundle on \(\PP(B)\). The perfect obstruction theory of \(S_\beta^{[n_0,n_1]}\) is now given by
\[\left[ (F(1))^\vee|_{S_\beta^{[n_0,n_1]}} \rightarrow \Omega_U |_{S_\beta^{[n_0,n_1]}}\right] \in \D^b(S_\beta^{[n_0,n_1]})\,.\]
I claim that there exist a cartesian square
\[
\begin{tikzcd}
\PP \arrow[d]\arrow[r]
	& \overline{\PP} \arrow[d] \\
S_\beta^{[n_0,n_1]} \arrow[r]
	& \PP(B)\,,
\end{tikzcd}
\]
where the vertical maps are smooth morphims. It follows that \(\PP\) is the zero locus in \(U_{\overline{\PP}}  = \overline{\PP} \times_{\PP(B)} U \subset \overline{\PP}\) of the section
\[(\pr^*s,0) \colon \O_{U_{\overline{\PP}}} \rightarrow \pr^*F(1) \oplus T_{\overline{\PP}/{\PP(B)}}^\vee\,,\]
where \(\pr\colon U_{\overline{\PP}} \rightarrow U\) denotes the projection. The complex
\[\left[ ((\pr^*F(1))^\vee \oplus T_{\overline{\PP}/{\PP(B)}})|_{\PP} \rightarrow \Omega_{U_{\overline{\PP}}} |_{\PP}\right] \in \D^b(\PP)\,\]
defines a perfect obstruction theory on \(\PP\), which satisfies the description of the proposition. In fact, we have
\begin{align*}
T_\PP
	& = \left(T_{U_{\overline{\PP}}} - F(1) - T_{\overline{\PP}/\PP(B)}^\vee\middle)\right|_\PP \\
	& = \left(\pr^*T_{U} + T_{\overline{\PP}/\PP(B)} - F(1) - T_{\overline{\PP}/\PP(B)}^\vee\middle)\right|_\PP \\
	& = p^*\eta^* (T_U - F(1))|_{S_\beta^{[n_0,n_1]}} + T_{\PP/S_\beta^{[n_0,n_1]}} - T_{\PP/S_\beta^{[n_0,n_1]}}^\vee \\
	& = p^*T_{M_\alpha^n} + T_{\PP/M_\alpha^n} - T_{\PP/M_\alpha^n}^\vee \,.
\end{align*}
We will now prove the claim.


Let \(\iota\), \(\sigma\) and \(\tau\) be the morphisms in the diagram
\[
\begin{tikzcd}
S_\beta^{[n_0,n_1]} \arrow[dd, swap, "\sigma"] \arrow[r, "\iota"] \arrow[ddr, bend right=20, swap, "\tau"]
	&\PP(B) \arrow[d]
\\
	& \Pic_{\beta}(S) \times S^{[n_0]} \times S^{[n_1]} \arrow[d]
\\
S_\beta \arrow[r]
	& \Pic_{\beta}(S) \,,
\end{tikzcd}
\]
and let \( \D_\beta \subset S_\beta \times S \) be the universal divisor. The bundle \(B\) depends on the choice of a Poincar\'e bundle \(\L_{\beta_i}\) on \(\Pic_\beta(S) \times S\), and by \cite[Section 4.2 - Equation (4.21)]{GT2} we have an isomorphism
\begin{equation}\label{EqRestricsToCanon}
(\tau \times \mathrm{id}_S)^* \L_{\beta_i} \otimes (\iota\times\mathrm{id}_S) ^*\O_{\PP(B)}(1) \cong (\sigma \times \mathrm{id}_S)^* \O(\D_\beta)\,
\end{equation}
of line bundles on \(S_\beta^{[n_0,n_1]} \times S\). Let \(Y\) be scheme completing the following cartesian diagram with \'etale vertical morphisms
\[
\begin{tikzcd}
M_\alpha^n \arrow[d, "\eta"] \arrow[r]
	& Y \arrow[d, "\eta"] \arrow[r]
	& (\Pic_{\alpha_0}(S) \times \Pic_{\alpha_1}(S))_{[L]}\arrow[d, "\eta"]
\\
S_\beta^{[n_0,n_1]} \arrow[r]
	&\PP(B) \arrow[r]
	& \Pic_{\beta}(S) \,.
\end{tikzcd}
\]
Let \(\L_{\alpha_0}\) be the Poincar\'e bundle on \(\Pic_{\alpha_0}(S) \times S\), and for \(i=0,1\), let \(\I^{[n_i]}\) denote the universal ideal sheaf on \(S^{[n_i]}\times S\). Define a sheaf
\[\overline{E} = \Big(\L_{\alpha_0} \otimes \I^{[n_0]}\Big) \oplus \Big(\L_{\alpha_0} \otimes \omega_S^* \otimes \L_{\beta}\otimes \O_{\PP(B)}(1) \otimes \I^{[n_1]}\Big) \]
on \(Y\times S\), where we have suppressed the various pull-backs. By \eqref{EqRestricsToCanon}, its restriction to \(M_\alpha^n\times S\) equals the sheaf \(E\) defined in Section \ref{SecConstruction}  (up to a twist by an element of \(\Pic(M_\alpha^n)\)) . Hence, possibly after increasing \(m\), we can define
\[\overline{\PP} \coloneqq \PP\bigg(\pi_{S*}\Big(\overline{E}\otimes \O_S(m\cdot H)\Big)\bigg) \rightarrow Y\,,\]
proving the claim.
\end{proof}
Proposition \ref{PropVirPre}, or more precisely the claim in its proof, has the following consequences.
\begin{corollary}\label{CorVirClass}
The scheme \(\PP\) carries a natural perfect obstruction theory with virtual cycle
\[ [\PP]^\vir = p^*[M_\alpha^n]^\vir \cap e(T_{\PP/M_\alpha^n}^\vee) \,.\]
\end{corollary}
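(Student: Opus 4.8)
The plan is to extract both virtual classes from the explicit ``zero locus of a section of a vector bundle on a smooth scheme'' presentations constructed in the proof of Proposition~\ref{PropVirPre}, and then compare them using the standard behaviour of localized Euler classes under direct sums, zero sections, smooth pull-back, and the \'etale covers appearing in the construction. The one genuinely nontrivial input, the $-T_{\PP/M_\alpha^n}^\vee$ summand in the virtual tangent bundle, is already encoded in that presentation as the $T_{\overline{\PP}/\PP(B)}^\vee$-factor of the section, so the whole corollary is essentially a computation of a localized top Chern class.

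First I would recall from the proof of Proposition~\ref{PropVirPre} that $S_\beta^{[n_0,n_1]}$ appears as the zero locus in the smooth scheme $U$ of the section $s\colon\O_U\to F(1)$, with perfect obstruction theory the Koszul-type complex $[(F(1))^\vee\to\Omega_U]|_{S_\beta^{[n_0,n_1]}}$; hence $[S_\beta^{[n_0,n_1]}]^\vir = 0^{!}_{F(1)}[U]$, the localized top Chern class of $F(1)$ with respect to $s$. In exactly the same way, that proof exhibits $\PP$ as the zero locus in the smooth scheme $U_{\overline{\PP}}=\overline{\PP}\times_{\PP(B)}U$ of the section $(\pr^*s,0)\colon\O_{U_{\overline{\PP}}}\to\pr^*F(1)\oplus T_{\overline{\PP}/\PP(B)}^\vee$, with the obstruction theory of the proposition being the associated Koszul complex; so $[\PP]^\vir = 0^{!}_{\pr^*F(1)\oplus T_{\overline{\PP}/\PP(B)}^\vee}[U_{\overline{\PP}}]$.

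Next I would unwind the right-hand side. The localized Euler class of a direct sum is the composite of the two refined Gysin maps, and the section on the second summand is identically $0$, so its vanishing locus is all of $U_{\overline{\PP}}$ and the corresponding Gysin map is cap product with the Euler class; this gives
\[
[\PP]^{\vir} = e\bigl(T_{\overline{\PP}/\PP(B)}^{\vee}\big|_{\PP}\bigr)\cap 0^{!}_{\pr^{*}F(1)}[U_{\overline{\PP}}].
\]
Since $\pr\colon U_{\overline{\PP}}\to U$ is smooth and the vanishing locus of $\pr^*s$ is $\overline{\PP}\times_{\PP(B)}S_\beta^{[n_0,n_1]}=\PP$, compatibility of refined Gysin maps with flat pull-back gives $0^{!}_{\pr^{*}F(1)}[U_{\overline{\PP}}]=g^{*}\,0^{!}_{F(1)}[U]=g^{*}[S_\beta^{[n_0,n_1]}]^\vir$, where $g\colon\PP\to S_\beta^{[n_0,n_1]}$ is the smooth projection. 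Writing $g=\eta\circ p$ with $\eta\colon M_\alpha^n\to S_\beta^{[n_0,n_1]}$ \'etale, the identity $\eta^{*}[S_\beta^{[n]}]^\vir=[M_\alpha^n]^\vir$ yields $g^{*}[S_\beta^{[n_0,n_1]}]^\vir=p^{*}[M_\alpha^n]^\vir$, and $\eta$ being \'etale forces $T_{\overline{\PP}/\PP(B)}^\vee|_{\PP}=T_{\PP/S_\beta^{[n_0,n_1]}}^\vee=T_{\PP/M_\alpha^n}^\vee$. Substituting gives $[\PP]^\vir = p^{*}[M_\alpha^n]^\vir\cap e(T_{\PP/M_\alpha^n}^\vee)$. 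The general-rank case is handled identically, using the rank-$r$ construction of \cite{GT2} in place of the rank-two one invoked above.

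The step I expect to require the most care is not any single computation but the bookkeeping: checking that the perfect obstruction theory produced in Proposition~\ref{PropVirPre} is literally the Koszul obstruction theory of the section $(\pr^*s,0)$ on $U_{\overline{\PP}}$, so that the localized-Euler-class description of the virtual class applies, and keeping the \'etale covers and smooth projections among $U_{\overline{\PP}}$, $\overline{\PP}$, $Y$, $\PP(B)$, $\PP$, $S_\beta^{[n_0,n_1]}$ and $M_\alpha^n$ straight while transporting cycles between them. The direct-sum factorisation of $0^{!}$, its compatibility with zero sections and with flat pull-back, and the identity $\eta^{*}[S_\beta^{[n]}]^\vir=[M_\alpha^n]^\vir$ are all either standard or already established; what must be spelled out is only that the trivially vanishing second component of the section contributes precisely the factor $e(T_{\PP/M_\alpha^n}^\vee)$.
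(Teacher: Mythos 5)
Your proposal is correct and fills in exactly the argument the paper intends when it says the corollary follows from the claim in the proof of Proposition~\ref{PropVirPre}: both virtual classes are localized top Chern classes of the zero-locus presentations, the identically-zero second component of $(\pr^*s,0)$ contributes the factor $e(T_{\overline{\PP}/\PP(B)}^\vee|_{\PP})=e(T_{\PP/M_\alpha^n}^\vee)$, and smooth/\'etale pull-back compatibility together with $\eta^*[S_\beta^{[n]}]^\vir=[M_\alpha^n]^\vir$ gives the stated formula. This is essentially the same approach as the paper, just with the standard intersection-theoretic compatibilities spelled out.
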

\begin{corollary}\label{CorVirStruc}
The scheme \(\PP\) carries a virtual structure sheaf, which is given by
\[ \O_{\PP}^\vir = p^*\O_{M_\alpha^n}^\vir \otimes \Lambda^\bullet \left(T_{\PP/M_\alpha^n}\right) \,.\]
\end{corollary}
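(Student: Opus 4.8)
The plan is to deduce Corollary~\ref{CorVirStruc} from the presentation of \(\PP\) as a zero locus established in the claim inside the proof of Proposition~\ref{PropVirPre}, exactly as that claim was used to obtain Corollary~\ref{CorVirClass}. Recall from there — I argue in the rank-two case, the general case following identically via the presentation of \cite[Section~5]{GT2} — that \(\PP\) is the zero locus, inside the scheme \(U_{\overline{\PP}} = \overline{\PP}\times_{\PP(B)}U\), of the section
\[(\pr^*s,0)\colon \O_{U_{\overline{\PP}}}\longrightarrow \pr^*F(1)\oplus T_{\overline{\PP}/\PP(B)}^\vee\,,\]
and that the perfect obstruction theory of Proposition~\ref{PropVirPre} is the Koszul one \([(\pr^*F(1)\oplus T_{\overline{\PP}/\PP(B)}^\vee)^\vee|_\PP \to \Omega_{U_{\overline{\PP}}}|_\PP]\) cut out by this section. (The scheme \(U_{\overline{\PP}}\) is smooth because it is smooth over the smooth scheme \(U\), which is open in a projective bundle over a product of Hilbert schemes of points and a Picard torsor.) For a zero-locus obstruction theory the virtual structure sheaf is the class in \(K_0(\PP)\) of the associated Koszul complex, and the idea is to split this class according to the two summands of the ambient bundle.

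Because the second component of \((\pr^*s,0)\) is identically zero, the Koszul complex of the pair is the tensor product of the Koszul complex of \(\pr^*s\) with the exterior algebra \(\Lambda^\bullet T_{\overline{\PP}/\PP(B)}\) equipped with the zero differential; in \(K\)-theory this says that adjoining the excess obstruction bundle \(T_{\overline{\PP}/\PP(B)}^\vee\) multiplies the virtual structure sheaf by \(\Lambda^\bullet\) of its dual, i.e.\ by \(\Lambda^\bullet(T_{\overline{\PP}/\PP(B)})|_\PP\) — just as, on Chow groups, it multiplies the virtual cycle by \(e(T_{\overline{\PP}/\PP(B)}^\vee)|_\PP\), cf.\ Corollary~\ref{CorVirClass}. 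The cartesian square of the claim presents \(\PP\to S_\beta^{[n]}\) as a base change of the smooth morphism \(\overline{\PP}\to\PP(B)\), and since \(\eta\colon M_\alpha^n\to S_\beta^{[n]}\) is \'etale this gives \(T_{\overline{\PP}/\PP(B)}|_\PP = T_{\PP/S_\beta^{[n]}} = T_{\PP/M_\alpha^n}\), so the first factor is exactly \(\Lambda^\bullet(T_{\PP/M_\alpha^n})\). The remaining factor is the virtual structure sheaf of \(\PP\) regarded as the zero locus of \(\pr^*s\) alone, with obstruction bundle \(\pr^*F(1)\); since \(\PP\) is the preimage of \(S_\beta^{[n]} = Z(s)\) under the smooth morphism \(U_{\overline{\PP}}\to U\) and \(\pr^*s\) is the pullback of \(s\), compatibility of the Koszul construction with smooth base change identifies this factor with the pullback of \(\O^\vir_{S_\beta^{[n]}}\) along the induced map \(\PP\to S_\beta^{[n]}\). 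That map factors as \(\eta\circ p\); since the perfect obstruction theory on \(M_\alpha^n\) is by construction \(\eta^*\) of the one on \(S_\beta^{[n]}\) we have \(\eta^*\O^\vir_{S_\beta^{[n]}} = \O^\vir_{M_\alpha^n}\) — the \(K\)-theoretic counterpart of the identity \(\eta^*[S_\beta^{[n]}]^\vir = [M_\alpha^n]^\vir\) recorded in Section~\ref{SectionVirtualClass} — so the remaining factor equals \(p^*\O^\vir_{M_\alpha^n}\). Combining the two yields \(\O_\PP^\vir = p^*\O^\vir_{M_\alpha^n}\otimes\Lambda^\bullet(T_{\PP/M_\alpha^n})\), as asserted.

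The step requiring the most care — more bookkeeping than genuine obstacle — is the behaviour of the virtual structure sheaf under the two base changes used above, namely the smooth morphism \(U_{\overline{\PP}}\to U\) and the \'etale map \(\eta\). This reduces to the facts that flat pullback commutes with the Koszul construction and with the localisation isomorphism \(G_0(\PP)\cong K_0^\PP(U_{\overline{\PP}})\), and one should keep the various duals straight in the \(K\)-theoretic formulae. As a consistency check, applying the Chern character and extracting leading terms sends \(\Lambda^\bullet(T_{\PP/M_\alpha^n})\) to \(e(T_{\PP/M_\alpha^n}^\vee)\) and \(p^*\O^\vir_{M_\alpha^n}\) to \(p^*[M_\alpha^n]^\vir\), so the identity above decategorifies precisely to Corollary~\ref{CorVirClass}; the two corollaries can accordingly be proved in parallel.
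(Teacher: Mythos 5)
Your argument is correct and is exactly the route the paper intends: Corollary~\ref{CorVirStruc} is stated there without a separate proof, as an immediate consequence of the zero-locus presentation established in the claim inside the proof of Proposition~\ref{PropVirPre}, and your factorisation of the Koszul complex along the two summands \(\pr^*F(1)\oplus T_{\overline{\PP}/\PP(B)}^\vee\) (with the dual of the excess summand contributing \(\Lambda^\bullet(T_{\PP/M_\alpha^n})\), and the \(\pr^*s\)-part pulling back to \(p^*\O^\vir_{M_\alpha^n}\) by smooth base change) is precisely the K-theoretic analogue of the excess-intersection argument giving Corollary~\ref{CorVirClass}.
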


\section{Localising the virtual class} \label{SecLocal}
The family of sheaves on \(\PP\times X\)
\[\E(1) = (p \times \mathrm{id}_X)^* \E\otimes \O_\PP(1)\]
constructed in Section \ref{SecConstruction} is invariant under the natural \(\CC^*\) action on the fibres of \(X\rightarrow S\), and can be equipped with an equivariant structure. In terms of the Higgs pair \((E,\phi)\) on \(M_\alpha^n \times S\), it is given by a \(\CC^*\) action on \(E\), which is chosen in such a way that the Higgs field \(\phi\) acts with weight \(-1\) on \(E\). We may assume that the highest weight occurring in the weight space decomposition of \(E\) is zero. Following Remark~\ref{RemTorsionFree}, we write
\[E = E^0 \oplus \ldots \oplus E^{-s}\,,\]
and let \(\CC^*\) act on \(E^{-i}\) with weight \(-i\).

The action induces an action on the family of Joyce-Song pairs \(\PP\). In fact, we have
\[\PP= \PP\left(\pi_{S*}E^0_m \oplus \ldots \oplus \pi_{S*}E^{-s}_m\right)\]
with
\[E^{-i}_m = E^{-i} \otimes \O_S(mH)\,, \quad i = 0,\ldots,s \,,\]
so the \(\CC^*\) action on \(\PP\) is given by
\[\lambda \cdot (x_0:\ldots:x_s) \rightarrow (x_0 \lambda^0:\ldots:x_s \lambda^{-s})\]
for \(\lambda \in \CC^*\) and \((x_0:\ldots:x_s) \in \PP\). For \(i = 0,\ldots, s\), we will write
\[\PP_i \coloneqq \PP\left(\pi_{S*}E^{-i}_m\right) \xrightarrow{p_i} M_\alpha^n\,\]
for the projective bundle, so the \(\CC^*\) fixed locus of \(\PP\) is given by
\[\PP_0\sqcup\ldots\sqcup \PP_s = \PP^{\CC^*} \subset \PP\,.
\]

Let \(\t\) be an equivariant parameter for the \(\CC^*\) action on a point. We can equip the perfect obstruction theory of Proposition~\ref{PropVirPre} with an equivariant structure, so that its virtual tangent bundle is given by
\begin{equation}\label{EqEquivStrucTan}
T_\PP = p^*T_{M_\alpha^n} + T_{\PP/M_\alpha^n} - T_{\PP/M_\alpha^n}^\vee \otimes \t
\end{equation}
in \(K^0_{\CC^*}(\PP)\).
On each $\PP_i$ with $i> 0$, we have an induced \(\CC^*\) localized perfect obstruction theory with virtual tangent bundle
\begin{align}\label{EqObstructionPi}
\left(T_\PP\vert_{\PP_i}\right)^\text{fix}
	& =p_i^*T_{M_\alpha^n} + \left( \left. \left(T_{\PP/M_\alpha^n} - T_{\PP/M_\alpha^n}^\vee \otimes \t \right) \right\vert_{\PP_i}\right)^\text{fix} \\
	& = p_i^*T_{M_\alpha^n} +T_{\PP_i/M_\alpha^n} - \left(p_i^*\pi_{S*}E_m^{-(i-1)}(1)\right)^\vee \notag
\end{align}
and virtual class
\begin{equation}\label{EqCapEul}
[\PP_i]^\vir = p_i^*[M_\alpha^n]^\vir \cap e\left(\left(p_i^*\pi_{S*}E_m^{-(i-1)}(1)\right)^\vee\right)\,.
\end{equation}
The \(\CC^*\) localized virtual tangent bundle on $\PP_0$ is given by
\[
\left(T_\PP\vert_{\PP_0}\right)^\text{fix}
= p_0^*T_{M_\alpha^n} + T_{\PP_0/M_\alpha^n}\,,\]
and we have
\begin{equation}\label{EqCapGeenEul}
[\PP_0]^\vir = p_0^*[M_\alpha^n]^\vir\,.
\end{equation}
Similarly we have
\[
\O_{\PP_i}^\vir =
\begin{cases}
p_0^* \O^\vir_{M_\alpha^n} & \text{if } i = 0 \\
p_i^* \O^\vir_{M_\alpha^n} \otimes \Lambda^\bullet \left(p_i^*\pi_{S*}E_m^{i-1}(1)\right) & \text{if }  i> 0 \,.
\end{cases}
\]

\section{Stability}
Let \((E,\phi)\) be the family of Higgs pairs on
\[\pi_S\colon M_\alpha^n \times S \rightarrow M_\alpha^n\]
constructed in Section~\ref{SecConstruction}. Recall that \(\phi\) is given by maps
\[\phi_i\colon E^{-(i-1)} \rightarrow E^{-i}\otimes\omega_S \,, \quad i = 1,\ldots,s\,,\]
where the torsion free sheaves of rank one \(E^{-i}\) are the summands of
\[E = E^0 \oplus \ldots \oplus E^{-s}\,.\]
We have defined the perfect obstruction theory of \(M_\alpha^n\) as the pull-back along the \'etale map \(\eta\colon M_\alpha^n \rightarrow S_\beta^{[n]}\) of the perfect obstruction theory on the nested Hilbert scheme \(S_\beta^n\). It follows directly by the description of \cite{GSY} that it is given by given by the dual of a cone on
\begin{equation}\label{EqPOTM}
\left(\bigoplus_{i=0}^s \RHom_{\pi_S} (E^{-i},E^{-i}) \right)_0 \xrightarrow{ \circ \phi - \phi \circ} \bigoplus_{i=1}^s \RHom_{\pi_S}(E^{-(i-1)}, E^{-i}\otimes \omega_S)\,,
\end{equation}
where the left hands side is given by the kernel of the trace map
\[\bigoplus_{i=0}^s \RHom_{\pi_S} (E^{-i},E^{-i}) \hookrightarrow \RHom_{\pi_S} (E,E) \xrightarrow{\mathrm{tr}} R\pi_{S*}\O_S.\]

\begin{proposition}
For each \(i = 1,\ldots,s\), we have an exact sequence
\begin{equation}\label{EqCosections}
\begin{tikzcd}[column sep = small, row sep = small]
ob_{M_\alpha^n} \arrow[r, "\sigma_i"]
	& R^2\pi_{S*} \O_S \arrow[r, "\phi_i"]
	& \EExt_{\pi_S}^2(E^{-(i-1)}, E^{-i}\otimes \omega_S) \arrow[r]
	& 0 \,.
\end{tikzcd}
\end{equation}
\end{proposition}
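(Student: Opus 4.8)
The plan is to read the sequence off the cohomology long exact sequence of the cone that defines the obstruction theory of \(M_\alpha^n\), after making the relevant cohomology sheaves and maps explicit and then projecting onto the \(i\)-th summand.

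First I would set \(A^\bullet=\big(\bigoplus_{j=0}^s\RHom_{\pi_S}(E^{-j},E^{-j})\big)_0\) and \(B^\bullet=\bigoplus_{i=1}^s\RHom_{\pi_S}(E^{-(i-1)},E^{-i}\otimes\omega_S)\), and let \(D\colon A^\bullet\to B^\bullet\) be the map \eqref{EqPOTM}, so that the virtual tangent complex of \(M_\alpha^n\) is (quasi-isomorphic to) \(\mathrm{Cone}(D)\) and \(ob_{M_\alpha^n}=\mathcal{H}^1(\mathrm{Cone}(D))\). Since this obstruction theory is perfect, \(\mathrm{Cone}(D)\) has amplitude \([0,1]\), so \(\mathcal{H}^2(\mathrm{Cone}(D))=0\) and the cohomology long exact sequence of the triangle \(A^\bullet\xrightarrow{D}B^\bullet\to\mathrm{Cone}(D)\to A^\bullet[1]\) gives an exact sequence
\[ob_{M_\alpha^n}\xrightarrow{\rho}\mathcal{H}^2(A^\bullet)\xrightarrow{D_*}\mathcal{H}^2(B^\bullet)\to 0\,.\]
It remains only to identify the two right-hand terms, compute \(D_*\), and split off the \(i\)-th summand.

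The key point is the identification \(\mathcal{H}^2(A^\bullet)\cong\{(\eta_j)_{j=0}^s\in(R^2\pi_{S*}\O_S)^{\oplus(s+1)}:\textstyle\sum_j\eta_j=0\}\). For this I would use that the trace splits \(\bigoplus_j\RHom_{\pi_S}(E^{-j},E^{-j})\) as \(A^\bullet\oplus R\pi_{S*}\O_S\), so that \(\mathcal{H}^2(A^\bullet)=\ker\big(\bigoplus_j\EExt^2_{\pi_S}(E^{-j},E^{-j})\xrightarrow{\sum\mathrm{tr}}R^2\pi_{S*}\O_S\big)\), reducing the claim to the vanishing \(\EExt^2_{\pi_S}(E^{-j},E^{-j})_0=0\). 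As \(E^{-j}=L_j\otimes I_j\) is rank-one torsion-free, \(\EExt^2_{\pi_S}(E^{-j},E^{-j})=\EExt^2_{\pi_S}(I_j,I_j)\), which by relative Serre duality is dual to \(\mathcal{H}om_{\pi_S}(I_j,I_j\otimes\omega_S)\); fibrewise \(\Hom(I,I\otimes\omega_S)=H^0(\omega_S)\) with the trace an isomorphism, and cohomology-and-base-change carries this vanishing of the trace-free part over to the relative sheaf. Dually one gets \(\mathcal{H}^2(B^\bullet)=\bigoplus_{i=1}^s\EExt^2_{\pi_S}(E^{-(i-1)},E^{-i}\otimes\omega_S)\). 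Tracing \(D=\circ\phi-\phi\circ\) through these identifications — recalling that \(\phi\) is block-subdiagonal with entries \(\phi_i\) and that a class of \(\mathcal{H}^2(A^\bullet)\) is represented by \(\bigoplus_j\eta_j\cdot\mathrm{id}_{E^{-j}}\) — shows that on \(\mathcal{H}^2\) it sends \((\eta_j)_j\) to \(\big((\eta_i-\eta_{i-1})\cup\phi_i\big)_{i=1}^s\), cup product with \(\phi_i\in\Hom_{\pi_S}(E^{-(i-1)},E^{-i}\otimes\omega_S)\) being exactly the map \(\phi_i\) of the statement.

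To finish, over the base (a \(\CC\)-algebra, so \(s+1\) is invertible) the map \((\eta_j)_j\mapsto(\eta_i-\eta_{i-1})_{i=1}^s\) is an isomorphism \(\mathcal{H}^2(A^\bullet)\xrightarrow{\sim}(R^2\pi_{S*}\O_S)^{\oplus s}\) under which \(D_*\) becomes the diagonal map \((\delta_i)_i\mapsto(\delta_i\cup\phi_i)_i\); taking \(\sigma_i\) to be \(\rho\) followed by the \(i\)-th projection, the exactness of the displayed three-term sequence and the summand-wise form of \(D_*\) give \(\mathrm{im}(\sigma_i)=\ker(\phi_i)\) and surjectivity of \(\phi_i\), which is \eqref{EqCosections}. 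The main obstacle is the third paragraph: confirming that after the trace splitting the commutator \(\circ\phi-\phi\circ\) really becomes the diagonal cup-product map with no off-diagonal correction terms, and making the vanishing \(\EExt^2_{\pi_S}(E^{-j},E^{-j})_0=0\) work in families; the surjectivity of \(\phi_i\) can alternatively be checked directly, its Serre dual \(\Hom_{\pi_S}(E^{-i},E^{-(i-1)})\to\pi_{S*}\omega_S\), \(\psi\mapsto\mathrm{tr}(\phi_i\circ\psi)\), being injective since a product of nonzero sections of line bundles on the integral surface \(S\) is nonzero.
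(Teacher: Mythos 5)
Your proposal is correct and follows essentially the same route as the paper: take the cohomology long exact sequence of the cone \eqref{EqPOTM}, identify the trace-free part of \(\bigoplus_j\EExt^2_{\pi_S}(E^{-j},E^{-j})\) with \(s\) copies of \(R^2\pi_{S*}\O_S\) via successive differences of traces, observe that the induced map is the diagonal map \((b_i)\mapsto(b_i\cdot\phi_i)\), and project onto the \(i\)-th summand. You supply somewhat more detail (the vanishing \(\EExt^2_{\pi_S}(E^{-j},E^{-j})_0=0\) and the explicit computation of the commutator on \(\mathcal{H}^2\)) than the paper, which asserts these identifications directly.
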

\begin{proof}
By \eqref{EqPOTM} have an exact sequence
\[
\begin{tikzcd}[column sep = small, row sep = small]
ob_{M_\alpha^n} \arrow[r]
	& \left(\bigoplus_{i=0}^s \EExt_{\pi_S}^2 (E^{-i},E^{-i}) \right)_0 \arrow[r] \arrow[d, "\cong"]
	& \bigoplus_{i=1}^s \EExt_{\pi_S}^2(E^{-(i-1)}, E^{-i}\otimes \omega_S) \arrow[r]
	& 0 \,.\\
	& \bigoplus_{i=1}^s R^2\pi_{S*} \O_S \arrow[ru, dotted, swap, "\phi"]
	&
	&
\end{tikzcd}
\]
Let the vertical isomorphism is given by
\[(a_0,\ldots,a_s) \mapsto (\tr(a_1) - \tr(a_0),\ldots, \tr(a_s) - \tr(a_{s-1}))\,,\]
so the dotted arrow \(\phi\) is the map given by the rule
\[(b_1,\ldots,b_s) \mapsto (b_1 \cdot \phi_1,\ldots, b_s \cdot \phi_s)\,.\]
It follows that for each summand \(\phi_i\) of \(\phi\), we have an exact sequence \eqref{EqCosections}.
\end{proof}

\begin{remark}
In particular, we see that if \(p_g(S)>0\), and if
\(\alpha_{i-1} - \alpha_i\)
is not the class of an effective divisor (so \(\EExt_{\pi_S}^2(E^{-(i-1)}, E^{-i}\otimes \omega_S)=0\) by Serre duality), the obstruction sheaf \(ob_{M_\alpha^n}\) has a trivial factor \(H^2(\O_S)\). In rank \(s+1 = 2\), this fact is used in \cite[Section 2.2]{GSY} to define a reduced virtual class on \(S_\beta^{[n_0,n_1]}\).
\end{remark}

Recall that we write \(\E\) for the sheaf on \(M_\alpha^n\times S\), corresponding to \((E,\phi)\) via the spectral construction.

\begin{proposition}\label{PropCases}
Assume \(p_g(S)>0\). Then (at least) one of the following statements holds:
\begin{enumerate}[i)]
\item The family \(\E\) of sheaves on \(M_\alpha^n\times X\rightarrow M_\alpha^n\) is fibrewise Gieseker stable;
\item We have \(\alpha_0 = \ldots = \alpha_s\) and \(n_0 = \ldots = n_s\) (so in particular \(\E\) is fibrewise strictly Gieseker semistable);
\item The virtual class \([M_\alpha^n]^\vir\) vanishes.
\end{enumerate}
\end{proposition}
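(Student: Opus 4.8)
The plan is to run a case distinction according to whether the sheaves $\EExt_{\pi_S}^2(E^{-(i-1)},E^{-i}\otimes\omega_S)$ vanish, using the cosection sequence \eqref{EqCosections} together with cosection localisation to force $[M_\alpha^n]^\vir = 0$ whenever they do. Write $F_j = E^{-j}\oplus\cdots\oplus E^{-s}$ for $j = 0,\ldots,s$. Since each $\phi_i$ is injective, each $F_j$ is $\phi$-invariant and hence, via the spectral construction of Remark~\ref{RemTorsionFree}, corresponds to a subsheaf of $\E$ whose reduced Hilbert polynomial equals that of $F_j$ computed on $S$ with respect to $H$. The first, purely numerical, step is the equivalence: $\E$ is fibrewise Gieseker stable if and only if $p(F_j) < p(E)$ for all $j = 1,\ldots,s$. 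For the non-trivial direction, given a $\phi$-invariant proper subsheaf $F \subsetneq E$, passing to the associated graded for the filtration $E = F_0 \supseteq F_1 \supseteq \cdots \supseteq F_{s+1} = 0$ yields a graded $\phi$-invariant subsheaf $G = \bigoplus_i G^{-i} \subseteq E$ with $G^{-i}\subseteq E^{-i}$ and $p(G) = p(F)$; injectivity of the $\phi_i$ forces $\{i : G^{-i}\neq 0\}$ to be an interval $\{j,\ldots,s\}$; the value $j = 0$ is impossible for a destabiliser (a full-rank proper subsheaf has strictly smaller reduced Hilbert polynomial); and $c_1(G^{-i})\cdot H \leq \alpha_i\cdot H$ gives $\mu(G)\leq\mu(F_j)$ for the slope $\mu$. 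Since a strictly smaller slope already forces a strictly smaller reduced Hilbert polynomial, these conditions are indeed equivalent, and — this is the key point — they are constant over $M_\alpha^n$, the classes $\alpha_i$ and integers $n_i$ being fixed.

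Now assume we are not in case (i), so $p(F_j)\geq p(E)$ for some $j$. If for some $i$ the class $\alpha_{i-1}-\alpha_i$ is not the class of an effective divisor, then there is no non-zero map $E^{-i}\to E^{-(i-1)}$ on any fibre — such a map would be injective with cokernel of dimension $\leq 1$ and class $\alpha_{i-1}-\alpha_i$, forcing this class to be effective — so by (relative) Serre duality $\EExt_{\pi_S}^2(E^{-(i-1)},E^{-i}\otimes\omega_S) = 0$. By \eqref{EqCosections}, $\sigma_i$ then surjects onto the trivial bundle $R^2\pi_{S*}\O_S$, of positive rank $p_g(S)$; composing with a coordinate projection gives a cosection of $ob_{M_\alpha^n}$ with empty degeneracy locus, so cosection localisation \cite{KL} yields $[M_\alpha^n]^\vir = 0$, i.e.\ case (iii). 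We may therefore assume every $\alpha_{i-1}-\alpha_i$ is effective, so $\alpha_0\cdot H\geq\cdots\geq\alpha_s\cdot H$. If these degrees are not all equal, the average of each proper tail is strictly smaller than the overall average, i.e.\ $\mu(F_j)<\mu(E)$ for all $j = 1,\ldots,s$, so $\E$ is fibrewise Gieseker stable by the first step, contradicting our assumption; hence $\alpha_0\cdot H = \cdots = \alpha_s\cdot H$, and, an effective divisor of zero $H$-degree being trivial, $\alpha_0 = \cdots = \alpha_s$.

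In this last situation $\mu(F_j) = \mu(E)$ for all $j$, and the $\det(E^{-i})$ all have class $\alpha_0$. A non-zero map $E^{-i}\to E^{-(i-1)}$ between such rank-one sheaves is injective with zero-dimensional cokernel of length $n_i-n_{i-1}$, so can exist only if $n_i\geq n_{i-1}$; hence if $n_i<n_{i-1}$ for some $i$ we again get $\EExt_{\pi_S}^2(E^{-(i-1)},E^{-i}\otimes\omega_S) = 0$ and, exactly as above, $[M_\alpha^n]^\vir = 0$, case (iii). The only remaining possibility is $n_0\leq n_1\leq\cdots\leq n_s$. If these integers are not all equal, then — the $c_1\cdot H$-terms of the reduced Hilbert polynomials now agreeing — comparison of the remaining (Euler-characteristic) terms shows $p(F_j)<p(E)$ for every $j = 1,\ldots,s$, so $\E$ is fibrewise Gieseker stable, again contradicting our assumption; and if they are all equal then $\alpha_0 = \cdots = \alpha_s$ and $n_0 = \cdots = n_s$, which is case (ii). This exhausts the possibilities.

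I expect the main obstacle to be the bookkeeping of the first step — showing that passing to the associated graded converts an arbitrary $\phi$-invariant destabiliser into a graded one with the same Hilbert polynomial and interval-shaped weight support, so that the $F_j$ are the extremal destabilisers — together with the verification that the case analysis above is genuinely exhaustive, each branch landing in (i), (ii) or (iii). The cosection-localisation input is the conceptual engine but is applied in a standard way, once the relevant $\EExt^2$ has been seen to vanish.
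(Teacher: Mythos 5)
Your proposal is correct and, in its core mechanism, identical to the paper's proof: in every branch where stability fails for a "numerical" reason you observe \(\Hom(E^{-i}|_b,E^{-(i-1)}|_b)=0\) fibrewise, deduce \(\EExt^2_{\pi_S}(E^{-(i-1)},E^{-i}\otimes\omega_S)=0\) by Serre duality, read off from \eqref{EqCosections} that \(\sigma_i\) surjects onto \(R^2\pi_{S*}\O_S\neq 0\), and invoke Kiem--Li to kill \([M_\alpha^n]^\vir\). The only real difference is that the paper outsources the dichotomy "not in case i) or ii) \(\Rightarrow\) some \(\alpha_{i-1}-\alpha_i\) is non-effective, or \(\alpha_{i-1}=\alpha_i\) and \(n_{i-1}>n_i\)" to Lemmas~3.2 and 3.4 of \cite{L}, whereas you reprove this numerical input from scratch via the associated graded of a \(\phi\)-invariant destabiliser; that makes your write-up more self-contained at the cost of redoing the bookkeeping, and your contrapositive organisation (effectiveness of all differences, then monotonicity of degrees, then of the \(n_i\)) is an equivalent repackaging of the cited lemmas. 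One point in your first step is misstated: the sentence "a strictly smaller slope already forces a strictly smaller reduced Hilbert polynomial" does not justify the reduction to the tails \(F_j\) in exactly the branch where you need it, namely \(\alpha_0=\cdots=\alpha_s\) with the \(n_i\) not all equal, since there all slopes coincide and the comparison happens in the constant (Euler-characteristic) term. The correct and immediate fix is already in your setup: the graded destabiliser \(G\) has support exactly \(\{j,\ldots,s\}\), hence \(G\subseteq F_j\) with equal rank, so \(P_G\le P_{F_j}\) termwise for large twists and therefore \(p(G)\le p(F_j)\); combined with \(p(G)=p(F)\ge p(E)\) this contradicts \(p(F_j)<p(E)\) without any appeal to slopes. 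With that one-line repair the argument is complete and matches the paper.
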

\begin{remark}
In \cite[Section 2.2]{GSY} (see the discussion above) and in \cite[Section 4.1]{GT2}, it is shown that \([S_\beta^{[n]}]^\vir\) vanishes, unless the classes
\[\alpha_{i-1} - \alpha_{i}\,, \quad i = 1,\ldots,s\]
are classes of effective divisors. By an easy lemma \cite[Lemma 3.2]{L}, \(\E\) is slope semistable in the latter case.
\end{remark}
\begin{remark}
In \cite[Proposition 3.5]{L}, we have proven a similar statement for the class
\[\iota_*[S_\beta^{[n]}]^\vir \in A^*(S^{[n_0]} \times \cdots \times S^{[n_s]} \times S_{\alpha_1} \times \cdots \times S_{\alpha_s})\]
under the condition \(H^1(S,\O_S) = 0\), using the formula of \cite[Theorem 5.6]{GT2}.
\end{remark}
\begin{proof}

Assume that we are not in case i) or ii). By Lemmas~3.2 and 3.4 of \cite{L}, there is an \(i\in \{1,\ldots,s\}\) such that
\begin{itemize}
\item \(\alpha_{i-1} - \alpha_i\) is not the class of an effective divisor, or
\item \(\alpha_{i-1} = \alpha_i\) and \(n_{i-1} > n_i\).
\end{itemize}
In either case, we have
\[\Ext_{\O_S}^2(E^{-(i-1)}_x, E^{-i}_x\otimes\omega_S)
	= \Hom_{\O_S}(E^{-i}_x, E^{-(i-1)}_x)^* = 0\,, \quad
	\forall x\in M_\alpha^n(\CC)\,.\]
By the exact sequence \eqref{EqCosections}, the map
\[ob_{M_\alpha^n} \xrightarrow{\sigma_i} R^2\pi_{S*} \O_S \neq 0\]
is surjective. It follows that \([M_\alpha^n]^\vir = 0\) by e.g.\ \cite{KL}.
\end{proof}

For the universal family of Joyce-Song pairs \((\E(1),s)\) on
\[\pi_X\colon\PP\times X \rightarrow \PP\]
defined in \eqref{EqUnivJS}, let \((\E(1),s)|_{\PP_i}\) denote its restriction to \(\PP_i\subset \PP\).

\begin{theorem}\label{TheCasesJS}
Assume \(p_g(S)>0\). For each \(i = 0,\ldots,s\), (at least) one of the following statements holds:
\begin{enumerate}[i)]
\item The family of Joyce-Song pairs \((\E(1),s)|_{\PP_i}\) on \(\PP_i\times X \rightarrow \PP_i\) is fibrewise stable;
\item The virtual class \([\PP_i]^\vir\) vanishes.
\end{enumerate}
\end{theorem}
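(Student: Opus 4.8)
The plan is to deduce this from Proposition~\ref{PropCases} applied to the family $\E$ on $M_\alpha^n$, upgrading each of its three alternatives to the Joyce--Song setting. If $[M_\alpha^n]^\vir=0$, then by \eqref{EqCapEul} and \eqref{EqCapGeenEul} each $[\PP_i]^\vir$ is a Chern-class cap of $p_i^*[M_\alpha^n]^\vir$, hence vanishes — this is alternative~(ii) of the theorem for every $i$. If $\E$ is fibrewise Gieseker stable, then $\E$ has no proper subsheaf with reduced Hilbert polynomial $p(\E)$, so \emph{any} Joyce--Song section makes $(\E,s)$ a stable pair; thus $(\E(1),s)|_{\PP_i}$ is fibrewise stable for every $i$, which is alternative~(i). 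So the real content is the remaining case $\alpha_0=\dots=\alpha_s$, $n_0=\dots=n_s$.

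In that case all summands $E^{-k}$ of $E=q_*\E$ have equal reduced Hilbert polynomial, so each tail $T_j:=\bigoplus_{k\ge j}E^{-k}$ is a $\phi$-invariant — hence $\O_X$-submodule — subsheaf $\mathcal T_j\subseteq\E$ with $p(\mathcal T_j)=p(\E)$, proper once $j\ge1$. I would exploit that on $\PP_i$ the Joyce--Song section $s$ has image in the weight space $E^{-i}$, and that the structure maps $\phi_k$ are fibrewise injective (they restrict the non-zero maps $L_{k-1}\to L_k\otimes\omega_S$ built into $M_\alpha$, which are injective as maps of rank-one torsion-free sheaves). Hence the $\O_X$-submodule generated by $\operatorname{im}(s)$ meets $E^{-k}$ nontrivially for every $k\ge i$. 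For $i=0$ this submodule has full rank $r$, and a full-rank subsheaf of $\E$ with reduced Hilbert polynomial $p(\E)$ must equal $\E$; so $s$ factors through no proper destabilising subsheaf and $(\E(1),s)|_{\PP_0}$ is fibrewise stable — alternative~(i). For $i\ge1$, on the other hand, that submodule lies inside the proper subsheaf $\mathcal T_i$ with $p(\mathcal T_i)=p(\E)$, so the pairs on $\PP_i$ are never stable, and the task becomes to prove $[\PP_i]^\vir=0$.

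For that last vanishing I would use cosection localisation. By \eqref{EqObstructionPi} the obstruction sheaf of $\PP_i$ splits as $p_i^*ob_{M_\alpha^n}\oplus V^\vee$ with $V:=p_i^*\pi_{S*}E_m^{-(i-1)}(1)$, and \eqref{EqCapEul} reads $[\PP_i]^\vir=p_i^*[M_\alpha^n]^\vir\cap e(V^\vee)$. Fix a non-zero $\theta\in H^0(K_S)$, available since $p_g(S)>0$. Contracting the cosection $\sigma_i\colon ob_{M_\alpha^n}\to R^2\pi_{S*}\O_S$ from \eqref{EqCosections} against $\theta$ gives a cosection whose non-surjectivity locus, computed from \eqref{EqCosections} by relative Serre duality, is $D_i=\{\,x:E^{-(i-1)}_x\cong E^{-i}_x,\ \phi_{i,x}\in\CC\cdot\theta\,\}$ — a proper closed subscheme of $M_\alpha^n$. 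Cosection localisation \cite{KL} then confines $[M_\alpha^n]^\vir$, and hence $[\PP_i]^\vir$, to $p_i^{-1}(D_i)$. On $p_i^{-1}(D_i)$ the isomorphism $E^{-(i-1)}\cong E^{-i}$ identifies $V$, up to a twist by a line bundle, with $p_i^*\pi_{S*}E_m^{-i}(1)$, whose tautological subbundle $\O_{\PP_i}(-1)\subset p_i^*\pi_{S*}E_m^{-i}$ supplies a nowhere-vanishing section; dualising gives a surjective cosection of the obstruction bundle $V^\vee$ there, and a second application of cosection localisation forces $[\PP_i]^\vir=0$.

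The hard part will be this last step — the $i\ge1$ half of the strictly-semistable case. Making the argument rigorous requires a careful identification of $D_i$ through Serre duality (in particular the degenerate behaviour when $p_g(S)=1$, where the condition $\phi_i\parallel\theta$ is vacuous and one falls back on $E^{-(i-1)}\not\cong E^{-i}$ holding generically), and a careful treatment of the line-bundle twists so that the tautological section of $\PP_i$ genuinely descends to a global cosection of $V^\vee$ over $p_i^{-1}(D_i)$. By contrast, the reduction to Proposition~\ref{PropCases} and the stability statement on $\PP_0$ are formal once the weight grading of the section is tracked.
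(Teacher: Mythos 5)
Most of your proposal coincides with the paper's argument: the reduction to case ii) of Proposition~\ref{PropCases} (cases i) and iii) giving alternative (i), resp.\ (ii), for every $i$ via \eqref{EqCapEul} and \eqref{EqCapGeenEul}), the observation that in the strictly semistable case the pairs on $\PP_i$ are stable exactly when $i=0$ (your tail subsheaf $\bigoplus_{k\ge i}E^{-k}$ argument is a correct expansion of the paper's one-line claim), and the first use of the cosection \eqref{EqCosections} together with Kiem--Li \cite{KL} to write $[M_\alpha^n]^\vir=\iota_*[M_\alpha^n]^\vir_{\mathrm{loc}}$ with support in the locus where $E^{-(i-1)}|_b\cong E^{-i}|_b$ (the paper keeps the $R^2\pi_{S*}\O_S$-valued cosection rather than contracting with a fixed $\theta\in H^0(K_S)$; this only shrinks the support and is immaterial).

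The genuine gap is your final step, which you yourself flag as the hard part. A ``second application of cosection localisation'' is not available in the form you invoke it: after the first localisation the class to be killed is $\iota'_*\bigl(p_i^*[M_\alpha^n]^\vir_{\mathrm{loc}}\cap\iota'^*e(V^\vee)\bigr)$, and the localized class is not itself the virtual class of a perfect obstruction theory on $p_i^{-1}(D_i)$, so Kiem--Li cannot simply be re-applied to it; moreover the theorem needs a cosection of the obstruction sheaf of the ambient space $\PP_i$, surjective away from the claimed support, whereas your cosection of $V^\vee$ (the tautological section of $p_i^*\pi_{S*}E_m^{-i}(1)$ transported through $E^{-(i-1)}\cong E^{-i}$) only exists over $p_i^{-1}(D_i)$, the identification failing off the degeneracy locus. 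The paper needs no second localisation: by \eqref{EqCapEul}, $[\PP_i]^\vir=p_i^*[M_\alpha^n]^\vir\cap e(V^\vee)$, and the projection formula for $\iota'$ reduces everything to the restriction $\iota'^*e(V^\vee)$; over the degeneracy locus one has $\iota'^*V=\iota'^*p_i^*\pi_{S*}E_m^{-i}(1)=\iota'^*T_{\PP_i/M_\alpha^n}+\O$ by the Euler sequence of $\PP_i=\PP(\pi_{S*}E_m^{-i})$, so $\iota'^*e(V^\vee)=e\bigl(\iota'^*T_{\PP_i/M_\alpha^n}^\vee+\O\bigr)=0$ and hence $[\PP_i]^\vir=0$. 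Since you already quote \eqref{EqCapEul} and \eqref{EqObstructionPi}, replacing the second cosection by this cap-product/projection-formula computation closes the gap with the ingredients you have; your remaining worry about a line-bundle twist in the family isomorphism over the degeneracy locus is a fair one, but note that the paper's own cohomology-and-base-change chain takes $(\iota\times\mathrm{id}_S)^*E_m^{-(i-1)}=(\iota\times\mathrm{id}_S)^*E_m^{-i}$ on the nose, so it is not a defect specific to your route.
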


\begin{proof}
By the equations \eqref{EqCapEul}, and \eqref{EqCapGeenEul}, we may assume that we are in case ii) of Proposition~\ref{PropCases}. Hence for \(x\in \PP\), we have
\[E_x = E^0_x \oplus \ldots \oplus E^{-s}_x\]
with
\[\ch(E^0_x) = \ldots = \ch(E^{-s}_x)\,.\]
It is easy to see that the Joyce-Song pair \((\E_x,s_x)\) on \(X\) is stable, precisely when the first term of
\[s_x = \sum_i s_x^i \in H^0(X,\E_x) = H^0(S,E^0_x) \oplus \ldots \oplus H^0(S,E^{-s}_x)\]
is non-zero. It follows that the fibres of \((\E(1),s)|_{\PP_i}\) are stable if and only if \(i=0\).

Now let \(i>0\). I claim that we have
\[[M_\alpha^{n}]^\vir = \iota_* [M_\alpha^n]^\vir_\mathrm{loc}\]
for a cycle class \([M_\alpha^n]^\vir_\mathrm{loc}\) on the closed subscheme
\begin{equation} \label{EqDegeneracyLocus}
\iota\colon M_\alpha^n(\sigma_i) \coloneqq \left\{ b\in M_\alpha^n  \middle | E^{-(i-1)}|_b \cong E^{-i}|_b \right\} \hookrightarrow M_\alpha^n \,.
\end{equation}
Consider the Cartesian square
\[
\begin{tikzcd}
\PP_i(\sigma_i) \arrow[r, "\iota'"] \arrow[d, "p_i"] & \PP_i \arrow[d, "p_i"] \\
M_\alpha^n(\sigma_i) \arrow[r, "\iota"] & M_\alpha^n \,.
\end{tikzcd}
\]
Then we have by the claim
\begin{align*}
p_i^*[M_\alpha^n]
	& = p_i^*\iota_* [M_\alpha^n]^\vir_\mathrm{loc} \\
	& = \iota'_* p_i^* [M_\alpha^n]^\vir_\mathrm{loc} \,.
\end{align*}

On the other hand, recall that we write \(E_m^{-(i)} = E^{-(i)} \otimes \O_S(mH)\) for fixed \(m \gg 0\), so we have by cohomology-and-basechange we have
\begin{align*}
\iota^{\prime*}p_i^*\pi_{S*}E_m^{-(i-1)}(1)
	& = p_i^*\pi_{S*}(\iota\times id_S)^*E_m^{-(i-1)}(1) \\
	& = p_i^*\pi_{S*}(\iota\times id_S)^*E_m^{-i}(1) \\
	& = \iota^{\prime*}p_i^*\pi_{S*}E_m^{-i}(1) \\
	& = \iota^{\prime*}T_{\PP_i/M_\alpha^n} + \O \,,
\end{align*}
and hence by \eqref{EqCapEul} and the projection formula for \(\iota'\) we find
\begin{align*}
[\PP_i]^\vir
	& = p_i^*[M_\alpha^n]^\vir \cap e\left(\left(p_i^*\pi_{S*}E_m^{-(i-1)}(1)\right)^\vee\right) \\
	& = \iota'_*\left(p_i^*[M_\alpha^n]^\vir_\mathrm{loc} \cap \iota^{\prime*}e\left(\left(p_i^*\pi_{S*}E_m^{-(i-1)}(1)\right)^\vee\right)\right) \\
	& = \iota'_*\left(p_i^*[M_\alpha^n]^\vir_\mathrm{loc} \cap e\left(\iota^{\prime*}T_{\PP_i/M_\alpha^n}^\vee + \O \right)\right) \\
	& = 0 \,.
\end{align*}

We will prove the claim using the cosection \eqref{EqCosections}. For \(b\in M_\alpha^n\) we have \(\ch(E^{-i}|_b) = \ch(E^{-(i-1)}|_b)\), and hence
\[\Hom_{\O_S}(E^{-i}|_b, E^{-(i-1)}|_b ) = 0 \quad \text{or} \quad E^{-i}|_b \cong E^{-(i-1)}|_b\,,\]
since \( E^{-(i-1)}|_b\) and \(E^{-i}|_b\) are torsion free and of rank one.
By the exact sequence \eqref{EqCosections} and Serre duality, it follows that the cosection
\[ob_{M_\alpha^n} \xrightarrow{\sigma_i} R\pi_{S*}\O_S \]
is surjective on the complement of the locus \eqref{EqDegeneracyLocus}. The claim follows by the work of Kiem and Li \cite{KL}.
\end{proof}

\section{Comparison to the VW moduli space}
We need the following lemma (compare to \cite[Proposition~5.1]{TT2}).
\begin{lemma}\label{LemmaCanonES}
Let \((\E,s)\in (\P^\perp)^{\CC^*} \) be a \(\CC^*\) fixed Joyce-Song pair. Then \(\E\) has a \emph{canonical} \(\CC^*\) equivariant structure.
\end{lemma}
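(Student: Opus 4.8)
The plan is to analyze a $\CC^*$-fixed Joyce-Song pair $(\E,s)$ and show that the $\CC^*$-action on $X$ lifts \emph{canonically} to $\E$. First I would recall that since $(\E,s)$ is $\CC^*$-fixed, for every $\lambda \in \CC^*$ there is an isomorphism $\psi_\lambda\colon \lambda^*\E \xrightarrow{\sim} \E$ compatible with $s$; the issue is that a priori $\psi_\lambda$ is only determined up to automorphisms of $\E$. The key observation is that the stability of the pair rigidifies this: because $s$ does not factor through any subsheaf of the same reduced Hilbert polynomial, the only automorphisms of $\E$ that fix $s$ are those acting as the identity on the ``Joyce-Song part'', which forces $\psi_\lambda$ to be \emph{unique} once we demand compatibility with $s$. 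This uniqueness is exactly what promotes the collection $\{\psi_\lambda\}$ to a genuine $\CC^*$-equivariant structure (the cocycle condition $\psi_{\lambda\mu} = \psi_\lambda \circ \lambda^*\psi_\mu$ holds automatically by uniqueness).

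The key steps, in order, would be: (1) unwind what it means for $(\E,s)$ to be a $\CC^*$-fixed point of $\P^\perp$ --- namely the existence of isomorphisms $\psi_\lambda$ as above, varying algebraically in $\lambda$; (2) show that the subgroup of $\mathrm{Aut}(\E)$ fixing the section $s$ is trivial, using the stability condition in the definition of a stable Joyce-Song pair (an automorphism fixing $s$ must restrict to the identity on the saturation of the image of $s$, and a semistable sheaf of pure dimension has no non-trivial automorphisms compatible with a non-destabilizing section --- this is the standard Joyce-Song rigidification argument, cf.\ \cite{JS}); (3) conclude that $\psi_\lambda$ is uniquely determined by the requirement $\psi_\lambda \circ \lambda^* s = s$; (4) check the cocycle/associativity condition and algebraicity in $\lambda$, which both follow formally from uniqueness together with the fact that $s$ itself is $\CC^*$-invariant by construction. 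Finally I would note that the canonical equivariant structure on $\E$ induces the weight-space decomposition $E = q_*\E = E^0 \oplus \cdots \oplus E^{-k}$ referenced immediately after the lemma in Section~\ref{SubSecFixed}, which is the reason one wants a \emph{canonical} (not merely existent) structure.

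The main obstacle I expect is step (2): one must be careful that killing automorphisms of the \emph{pair} is genuinely what the stability hypothesis gives, even when $\E$ itself is only strictly Gieseker semistable and hence may have a large automorphism group as a sheaf. The point is subtle because it is precisely in the strictly semistable case --- the case of interest for this paper --- that $\mathrm{Aut}(\E)$ is positive-dimensional, so the reduction to the trivial automorphism group of the pair must genuinely invoke that $s$ avoids all reduced-Hilbert-polynomial-preserving subsheaves. I would model this on \cite[Proposition~5.1]{TT2}, where an analogous statement is proved for the Vafa-Witten moduli space without the Joyce-Song rigidification, and adapt the argument to the pair setting; the Joyce-Song section in fact makes the argument cleaner, since it removes the need to track automorphisms only up to scalar.
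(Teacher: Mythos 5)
Your proposal follows essentially the same route as the paper: the fixed-point condition produces isomorphisms $\varphi_\lambda\colon \lambda^*\E \to \E$ compatible with $s$, and Joyce--Song stability kills the automorphism group of the pair, which gives uniqueness and hence the cocycle condition (the paper delegates this last step to the argument of Kool's Proposition~4.4). The one point where you diverge is the normalization: requiring $\psi_\lambda\circ\lambda^*s = s$ on the nose forces $s$ to lie in the weight-zero summand of $q_*\E$, whereas the paper observes that the equivariant structure is only unique up to a character twist $\t^z$ and then fixes it by demanding that the \emph{highest} weight of $E = q_*\E$ be zero. These conventions genuinely differ --- a fixed pair in the component $\PP_i$ has its section landing in $E^{-i}$, not in the top summand --- so with your convention the grading of the decomposition $E = E^0\oplus\cdots\oplus E^{-k}$ used in Section~\ref{SubSecFixed} and in the construction of Sections~\ref{SecConstruction}--\ref{SecLocal} would come out shifted. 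The mathematical content of the lemma is unaffected, but you should adopt the highest-weight-zero normalization to stay consistent with the rest of the paper.
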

\begin{proof}
We will write
\[
\begin{tikzcd}
\CC^* \times X \arrow[r, shift left, "\rho"] \arrow[r, shift right, swap, "\pr_2"] & X
\end{tikzcd}
\]
for the action morphism and the projection respectively. Since \((\E,s)\) is \(\CC^*\) fixed, there exists an isomorphism \(\varphi\colon \rho^*\E \rightarrow \pr_2^*\E\) that fits in the diagram
\[
\begin{tikzcd}
\O(-m) \arrow[r, "\rho^*s"]\arrow[dr, swap, "\pr_2^*s"] & \rho^* \E \arrow[d,"\varphi"] \\
 & pr_2^*\E \,.
\end{tikzcd}
\]
By the stability of \((\E,s)\), we see that \(\varphi|_{\{1\}\times X} \colon \E\rightarrow\E\) is the identity. The argument of e.g.\ \cite[Proposition 4.4]{K} shows that \(\varphi\) defines an equivariant structure on \(\E\), which is unique up to a twist by a character \(\t^z \in \mathrm{Aut}(\CC^*) \cong \ZZ\). Now \(\varphi\) induces a \(\CC^*\)-action on \(E = q_* \E\). After multiplying \(\varphi\) by a power of \(\t\), we may assume that the highest weight occurring in the weight space decomposition of \(E\) is zero. We call \(\varphi\) the canonical equivariant structure on \(\E\).
\end{proof}

Fix a charge
\(\gamma = (r, c_1, c_2) \in H^\mathrm{even}(S)\),
and classes \(\alpha_0,\ldots,\alpha_s \in H^2(S,\ZZ)\) and \(n_0,\ldots,n_s \in H^4(S,\ZZ) = \ZZ\) with
\begin{equation}\label{EqCompatible}
r = s+1\,,\quad c_1 = \alpha_0 + \ldots + \alpha_s\,, \quad c_2 = n_0 + \ldots + n_s + \sum_{i<j} \alpha_i \alpha_j\,.
\end{equation}
Let \(L\) be a vector bundle on \(S\) with \(c_1(L) = c_1\), as in the introduction, write
\[\gamma^\perp = (r, L , c_2)\,.\]
Recall that we have defined
\[\P^\perp_{1^r} \subset (\P^\perp)^{\CC^*}\]
as the (open and closed) locus of Joyce-Song pairs \((\E,s)\) with
\[\rk(E^{-i}) = 1\,, \quad i = 0,\ldots, s\]
in the canonical weight space decomposition
\[E = q_* \E = E^0 \oplus \ldots \oplus E^{-s}\,.\]
induced by Lemma~\ref{LemmaCanonES}. Now define an open and closed subscheme
\[\P_\alpha^n \subset \P^\perp_{1^r}\]
by the rule
\[c(E^{-i}) =  1 + \alpha_i + n_i \,, \quad i=0,\ldots,s \,.\]

In the terminology of the introduction, the family \(\E\) on \(M_\alpha^n \times X \rightarrow M_\alpha^n\) constructed in Section~\ref{SecConstruction} is a family of sheaves of type \(\gamma^\perp\) (the centre of mass property is given by the fact that the corresponding Higgs field \(\phi\colon E \rightarrow E \otimes \omega_S\) is trace-free). It follows that the family of Joyce-Song pairs \((\E(1), s)\) on \(\PP\times X \rightarrow \PP\) defines a morphism
\begin{equation}\label{EqComparison}
\PP \supset \PP^{\mathrm{stab}} \rightarrow \P^\perp = \P^\perp_\gamma(m)
\end{equation}
on its stable locus.

\begin{proposition}\label{PropComparison}
The map \eqref{EqComparison} restricts to an isomorphism
\[(\PP^\mathrm{stab})^{\CC^*} \cong \P_\alpha^n\,.\]
\end{proposition}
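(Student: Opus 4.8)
The plan is to exhibit the isomorphism $(\PP^{\mathrm{stab}})^{\CC^*} \cong \P_\alpha^n$ by checking that the two schemes represent the same functor, and I would organize this around three observations: the $\CC^*$-fixed locus of $\PP^{\mathrm{stab}}$ is the stable locus of $\PP^{\CC^*} = \PP_0 \sqcup \cdots \sqcup \PP_s$; the map \eqref{EqComparison} sends a $\CC^*$-fixed stable Joyce-Song pair coming from $\PP_i$ to a pair whose canonical equivariant structure (Lemma~\ref{LemmaCanonES}) recovers the grading on $E$; and on the level of underlying Higgs data the construction of Section~\ref{SecConstruction} is precisely inverse to the spectral construction restricted to vertical pairs of the prescribed Chern character. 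First I would note, as in the proof of Theorem~\ref{TheCasesJS}, that a $\CC^*$-fixed pair $(\E_x, s_x)$ with $x \in \PP_i$ is stable if and only if $i = 0$: stability forces the section $s_x \in H^0(S, E^0_x) \oplus \cdots \oplus H^0(S, E^{-s}_x)$ to have non-zero component in the top weight space $H^0(S, E^0_x)$, which is exactly the condition cutting out $\PP_0 \subset \PP$. Hence $(\PP^{\mathrm{stab}})^{\CC^*} = (\PP_0)^{\mathrm{stab}}$, and since over $\PP_0$ the section is by construction tautological and non-zero in the top weight space, $(\PP_0)^{\mathrm{stab}} = \PP_0$ on the locus where $\E_x$ itself is semistable — which, by Proposition~\ref{PropCases}, is all of $M_\alpha^n$ in case (i) or (ii), and the complement is irrelevant because its virtual class vanishes; but for the scheme-theoretic isomorphism I want it on the nose, so I would phrase $\PP^{\mathrm{stab}}$ as the honest open locus and simply record that $(\PP^{\mathrm{stab}})^{\CC^*}$ is the open subscheme of $\PP_0$ over which $\E$ is fibrewise semistable.

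Next I would construct the morphism in both directions. In one direction, \eqref{EqComparison} restricted to $(\PP_0)^{\mathrm{stab}}$ produces a family of $\CC^*$-fixed stable Joyce-Song pairs of type $\gamma^\perp$; by the compatibility conditions \eqref{EqCompatible} and the identity $c(E^{-i}) = 1 + \alpha_i + n_i$ built into the definition of $\PP \to M_\alpha^n$ over $M_\alpha^n$, the induced map lands in the subscheme $\P_\alpha^n \subset \P^\perp_{1^r}$; this uses that the canonical $\CC^*$-structure of Lemma~\ref{LemmaCanonES} on the fibre $\E_x$ agrees with the equivariant structure of Section~\ref{SecLocal} (both have highest weight zero and Higgs field of weight $-1$, and stability makes the structure unique, so they coincide), whence the weight-space decomposition of $q_*\E_x$ is $E^0_x \oplus \cdots \oplus E^{-s}_x$ with the prescribed Chern classes. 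Conversely, given a family $(\E, s)$ over a base $T$ in $\P_\alpha^n$, the canonical equivariant structure yields $q_*\E = \mathcal{E}^0 \oplus \cdots \oplus \mathcal{E}^{-s}$ with each $\mathcal{E}^{-i}$ fibrewise rank-one torsion-free of Chern class $1 + \alpha_i + n_i$, and the weight-$(-1)$ Higgs field $\phi$ gives maps $\phi_i\colon \mathcal{E}^{-(i+1)} \to \mathcal{E}^{-i} \otimes \omega_S$; by Remark~\ref{RemTorsionFree} this is exactly the data classified by $M_\alpha^n$ (after the étale normalization fixing determinants to $L$, which is harmless because $\det q_*\E \cong L$ is part of type $\gamma^\perp$), and the Joyce-Song section $s$, whose top component is non-zero and which determines $(\E,s)$ up to the $\CC^*$-equivariant automorphisms of $\E_m$ acting trivially, defines a point of $\PP_0 = \PP(\pi_{S*}E^0_m)$ over that point of $M_\alpha^n$. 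One checks these two assignments are mutually inverse and functorial in $T$.

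The main obstacle I expect is the bookkeeping around the twist ambiguity and the étale cover $\eta\colon M_\alpha^n \to S_\beta^{[n]}$: the universal Higgs pair $(E,\phi)$ on $M_\alpha^n \times S$ is only defined up to twist by $\Pic(M_\alpha^n)$, and the projective bundle $\PP$ is insensitive to this twist, but one must verify that the resulting map to $\P^\perp$ is well-defined independently of the choice and that the determinant constraint $\det q_*\E \cong L$ matches the constant-determinant condition defining $M_\alpha$ as the fibre product in Section~\ref{SecConstruction} — i.e.\ that passing to $M_\alpha^n$ rather than $S_\beta^{[n]}$ is forced by, and not merely compatible with, working with sheaves of type $\gamma^\perp$ (which includes $\det q_*\E \cong L$, an actual isomorphism of line bundles, not just an equality of first Chern classes). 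Once this identification of bases is clean, the fibrewise statement over a point reduces to the elementary remark already used in the proof of Theorem~\ref{TheCasesJS} that a graded Joyce-Song pair is stable exactly when its section is non-zero in top weight, and the scheme-theoretic upgrade is then a matter of spelling out the two families and invoking Yoneda.
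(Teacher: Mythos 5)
Your overall strategy (comparing functors of points, after matching the canonical equivariant structure of Lemma~\ref{LemmaCanonES} with the equivariant structure of Section~\ref{SecLocal}) is the same as the paper's, which disposes of the proposition in one line by exactly this comparison. However, there is a genuine error in your execution: you claim that $(\PP^{\mathrm{stab}})^{\CC^*}=(\PP_0)^{\mathrm{stab}}$ and, in the inverse direction, that every point of $\P_\alpha^n$ has Joyce--Song section with non-zero top-weight component and hence lands in $\PP_0=\PP(\pi_{S*}E^0_m)$. The characterization ``stable $\Leftrightarrow$ top component non-zero'' is only valid in case ii) of Proposition~\ref{PropCases}, i.e.\ when $\alpha_0=\ldots=\alpha_s$ and $n_0=\ldots=n_s$, so that all graded pieces have the same reduced Hilbert polynomial; the proof of Theorem~\ref{TheCasesJS} explicitly reduces to that case before making the claim you quote. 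In general (for instance in case i), when $\E$ is fibrewise Gieseker stable) \emph{any} non-zero section gives a stable pair, so every $\PP_i$ lies in $\PP^{\mathrm{stab}}$, and correspondingly $\P_\alpha^n$ contains $\CC^*$-fixed stable pairs whose section is homogeneous of weight $-i\neq 0$ with respect to the canonical grading (fixedness forces the section to be a weight vector, but not of weight zero). Your construction sends all of these to $\PP_0$, which is not where they come from, so the two assignments you describe are not mutually inverse; your statement also contradicts Lemma~\ref{LemmaComparison} and its corollary, by which $\P^\perp_{1^r}$ is a disjoint union of schemes of the form $\PP_i$ for various $i$, not only $i=0$.

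The repair is to keep track of the weight of the section: decompose $\P_\alpha^n$ into open and closed pieces according to the weight $-i$ of the (homogeneous) Joyce--Song section under the canonical equivariant structure, and identify the weight-$(-i)$ piece with $\PP_i\cap\PP^{\mathrm{stab}}$ (which is $\PP_i$ or empty by Lemma~\ref{LemmaComparison}, the stability condition being the same numerical condition on both sides). With that bookkeeping in place, the rest of your argument --- the identification of the underlying graded Higgs data with points of $M_\alpha^n$ via Remark~\ref{RemTorsionFree}, the determinant/\'etale-cover discussion, and the insensitivity of $\PP$ to the $\Pic(M_\alpha^n)$-twist ambiguity --- is sound and matches the intended functor-of-points comparison.
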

\begin{proof}
C.f.\ \cite[Proposition~3.9]{GSY2}, this follows directly by comparing functors of points.
\end{proof}

\begin{lemma}\label{LemmaComparison}
For each \(i = 0,\ldots, s\) we have \(\PP_i \subset \PP^\mathrm{stab}\) or \(\PP_i \cap \PP^\mathrm{stab} = \emptyset\).
\end{lemma}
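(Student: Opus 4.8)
The plan is to determine, fibrewise, exactly which Joyce-Song sections yield a stable pair, and then to observe that the answer is governed by discrete data that is constant along each $\PP_i$. First I would analyse the sub-Higgs-pairs of $(E_x,\phi_x)$ for $x\in M_\alpha^n$. Writing $E_x=E^0_x\oplus\cdots\oplus E^{-s}_x$ as in Remark~\ref{RemTorsionFree}, each component $\phi_j\colon E^{-(j-1)}_x\to E^{-j}_x\otimes\omega_S$ is non-zero, hence injective since these sheaves are rank-one and torsion-free, because $M_\alpha^n$ by construction parametrises Higgs pairs with non-vanishing $\phi_j$'s. Passing to the weight filtration of a $\phi$-invariant subsheaf, injectivity of $\phi_j$ forces: if the graded piece in weight $-(j-1)$ has rank one, then so does the one in weight $-j$. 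Consequently every non-zero proper sub-Higgs-pair is contained, with full rank in each weight it meets, in one of the tails
\[F_{\leq -i}\ :=\ E^{-i}_x\oplus\cdots\oplus E^{-s}_x\,,\qquad i=1,\dots,s\,,\]
so its reduced Hilbert polynomial is at most $p(F_{\leq -i})$. It follows that $\E_x$ is Gieseker semistable if and only if $p(F_{\leq -i})\le p(\E_x)$ for all $i$ — a condition only on the Chern characters of the $E^{-j}_x$, which are constant on $M_\alpha^n$ — and that, when $\E_x$ is semistable, the proper subsheaves $\F\subsetneq\E_x$ with $p(\F)=p(\E_x)$ are precisely the tails $F_{\leq -i}$ with $p(F_{\leq -i})=p(\E_x)$. (This is essentially the content of Lemmas~3.2 and~3.4 of~\cite{L}.)

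Next I would split into two cases. If $\E$ is fibrewise \emph{not} Gieseker semistable, then no fibre supports a stable Joyce-Song pair, so $\PP^{\mathrm{stab}}=\emptyset$ and there is nothing to prove. Otherwise $\E$ is fibrewise Gieseker semistable, and I set
\[i_1\ :=\ \min\{\,i\in\{1,\dots,s\}\ :\ p(F_{\leq -i})=p(\E_x)\,\}\,,\]
with the convention $i_1:=s+1$ if this set is empty (i.e.\ if $\E$ is fibrewise Gieseker stable). Under the $\CC^*$-equivariant decomposition $H^0\bigl(X,\E_x\otimes\O_X(m\,q^*H)\bigr)=\bigoplus_{j=0}^s H^0\bigl(S,E^{-j}_x\otimes\O_S(mH)\bigr)$, a Joyce-Song section factors through the subsheaf corresponding to $F_{\leq -i}$ exactly when its components of weights $0,\dots,-(i-1)$ all vanish; together with the first paragraph this means $(\E_x,s_x)$ is stable if and only if the weight-$0,\dots,-(i_1-1)$ components of $s_x$ do not all vanish.

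Then I would read this off on $\PP_i=\PP\left(\pi_{S*}E^{-i}_m\right)$, where the tautological Joyce-Song section has its weight-$(-i)$ component nowhere vanishing and all other components identically zero. Hence $(\E_x,s_x)$ is stable at every point of $\PP_i$ when $i<i_1$, and at no point of $\PP_i$ when $i\ge i_1$; that is, $\PP_i\subset\PP^{\mathrm{stab}}$ in the first case and $\PP_i\cap\PP^{\mathrm{stab}}=\emptyset$ in the second, which is the assertion.

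The hard part is the structural statement of the first paragraph — that injectivity and weight $-1$ of $\phi$ make every sub-Higgs-pair ``tail-shaped'', so that Gieseker (semi)stability of $\E_x$ is locally constant on $M_\alpha^n$ and all destabilising subsheaves of equal reduced Hilbert polynomial are graded tails. Granting this, the remaining subtlety is that the conclusion is needed \emph{scheme-theoretically}: in the situations where $[M_\alpha^n]^{\vir}$, and hence $[\PP_i]^{\vir}$, vanishes as in Proposition~\ref{PropCases}, one must still check that $\PP^{\mathrm{stab}}$ is actually empty and not merely virtually trivial, which the tail description supplies.
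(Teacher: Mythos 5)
Your argument is correct and is essentially the paper's: the paper's proof simply invokes the proof of \cite[Lemma 3.2]{L}, which contains exactly your tail analysis of $\phi$-invariant subsheaves, to conclude that stability of $(\E_x,s_x)$ for $x\in\PP_i$ is a numerical condition on $\alpha_0,\ldots,\alpha_s$ and $n_0,\ldots,n_s$, hence constant along $\PP_i$. You have just written out the details that the paper leaves to the citation.
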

\begin{proof}
Following the proof of \cite[Lemma 3.2]{L}, the stability of \((\E_x,s_x)\) for \(x\in \PP_i\) is given by a numerical condition on \(\alpha_0,\ldots,\alpha_s\) and \(n_0,\ldots,n_s\), which is constant on \(\PP_i\).
\end{proof}

\begin{corollary}
\(\P^\perp_{1^r}\) is a disjoint union of schemes of the form \(\PP_i\).
\end{corollary}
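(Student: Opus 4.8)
The plan is to combine the comparison isomorphism of Proposition~\ref{PropComparison}, the rigidity Lemma~\ref{LemmaComparison}, and the weight-space decomposition \(\PP^{\CC^*} = \PP_0 \sqcup \ldots \sqcup \PP_s\) from Section~\ref{SecLocal}.

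First I would decompose \(\P^\perp_{1^r}\) into the pieces \(\P_\alpha^n\). Given a vertical Joyce-Song pair \((\E,s) \in \P^\perp_{1^r}\), Lemma~\ref{LemmaCanonES} produces the canonical weight decomposition \(q_*\E = E^0 \oplus \ldots \oplus E^{-s}\) with every \(E^{-i}\) torsion-free of rank one, so by Remark~\ref{RemTorsionFree} its total Chern class has the shape \(1 + \alpha_i + n_i\) for classes \(\alpha_i \in H^2(S,\ZZ)\) and integers \(n_i \geq 0\). Since the \(E^{-i}\) form flat families over \(\P^\perp_{1^r}\), these Chern classes are locally constant; since \(\E\) is of type \(\gamma^\perp\), the tuple \((\alpha_0,\ldots,\alpha_s;n_0,\ldots,n_s)\) satisfies the compatibility relations \eqref{EqCompatible}, and in particular only finitely many tuples occur. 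Hence \(\P^\perp_{1^r}\) is the disjoint union, over these finitely many tuples, of the open and closed subschemes \(\P_\alpha^n\).

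Next, fix one such tuple and form the associated projective bundle \(\PP\) of Section~\ref{SecConstruction}, with \(\CC^*\)-fixed locus \(\PP^{\CC^*} = \PP_0 \sqcup \ldots \sqcup \PP_s\). By Proposition~\ref{PropComparison} the comparison morphism \eqref{EqComparison} restricts to an isomorphism
\[
\P_\alpha^n \cong (\PP^\mathrm{stab})^{\CC^*} = \PP^{\CC^*} \cap \PP^\mathrm{stab} = \coprod_{i=0}^s \big( \PP_i \cap \PP^\mathrm{stab} \big)\,,
\]
and by Lemma~\ref{LemmaComparison} each term \(\PP_i \cap \PP^\mathrm{stab}\) is either all of \(\PP_i\) or empty. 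Thus \(\P_\alpha^n\) is identified with a disjoint union of some of the schemes \(\PP_i\), and running over all tuples yields the claim; disjointness across tuples is automatic since the \(\P_\alpha^n\) are pairwise disjoint.

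The only point needing care is the first step: that the rule \(c(E^{-i}) = 1 + \alpha_i + n_i\) genuinely defines an open and closed subscheme \(\P_\alpha^n\) and that these exhaust \(\P^\perp_{1^r}\). This follows from flatness of the universal family over \(\P^\perp_{1^r}\) together with the fact that a rank-one torsion-free sheaf is determined up to the data of a line bundle and a finite-length ideal; once this is in place the corollary is a direct bookkeeping consequence of Proposition~\ref{PropComparison} and Lemma~\ref{LemmaComparison}.
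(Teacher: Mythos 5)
Your argument is correct and is exactly the one the paper intends: the corollary is stated without proof as an immediate consequence of the decomposition of \(\P^\perp_{1^r}\) into the open and closed pieces \(\P_\alpha^n\), the isomorphism \(\P_\alpha^n \cong (\PP^\mathrm{stab})^{\CC^*}\) of Proposition~\ref{PropComparison}, and the all-or-nothing statement of Lemma~\ref{LemmaComparison} for each \(\PP_i\). Your extra care about \(c(E^{-i}) = 1 + \alpha_i + n_i\) defining open and closed subschemes is a reasonable filling-in of what the paper leaves implicit, not a deviation from its route.
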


We will show that the isomorphism of Proposition~\ref{PropComparison} respects the virtual structure. Let
\[(\mathscr{E},s)\]
denote the universal Joyce-Song pair on
\[\pi_X\colon \P^\perp \times X \rightarrow \P^\perp\,,\]
and consider the class
\[\mathscr{I}^\bullet = [\O \xrightarrow{s} \mathscr{E}_m] \in \D^b(\P^\perp \times X)\,.\]
There exists a splitting \cite{TT2}
\begin{multline}\label{EqDefPerp}
\RHom_{\pi_X}(\mathscr{I}^\bullet,\mathscr{I}^\bullet) = \RHom_{\pi_X}(\mathscr{I}^\bullet,\mathscr{I}^\bullet)_\perp \\
\oplus R\pi_{X*}\O_X \oplus R^{\geq1}\pi_{S*}\O_S \oplus R^{\leq1}\pi_{S*}\omega_S \otimes \t[-1]
\end{multline}
of objects in \(\D^b(\P^\perp)\), and by definition the perfect obstruction theory on \(\P^\perp\) is given by the dual of
\[\RHom_{\pi_X}(\mathscr{I}^\bullet,\mathscr{I}^\bullet)_\perp[1]\,.\]
In particular, the virtual tangent bundle of \(\P^n_\alpha\) is given by
\[\left( - \RHom_{\pi_X}(\mathscr{I}^\bullet,\mathscr{I}^\bullet)_\perp \Big |_{\P^n_\alpha} \right)^{\mathrm{fix}} \in K^0(\P^n_\alpha)\,.\]

Similarly, write
\[I^\bullet = [\O_{X\times \PP} \xrightarrow{s} \E_m(1)] \in \mathcal{D}^b(\PP \times X)\,,\]
 and let the object
\[\RHom_{\pi_X}(I^\bullet,I^\bullet)_\perp  \in \mathcal{D}^b(\PP)\]
be given by the rule \eqref{EqDefPerp} above.
Finally, let \(\RHom_{\pi_X}(\E,\E)_\perp \in\D^b(M_\alpha^n)\) be the object considered in \cite{TT} satisfying
\[\RHom_{\pi_X}(\E,\E) = \RHom_{\pi_X}(\E,\E)_\perp \oplus R\pi_{S*}\O_S \oplus R\pi_{S*}\omega_S \otimes \t[-1]\,.\]

By definition, pull-back along the morphism \eqref{EqComparison} identifies the restrictions to \(\PP^\mathrm{stab}\) of the Joyce-Song pairs \((\mathscr{E},s)\) and \((\E(1),s)\), and hence we have
\[ \mathscr{I}^\bullet |_{\PP^\mathrm{stab}} \cong I^\bullet |_{\PP^\mathrm{stab}}\]
and
\[\RHom_{\pi_X}(\mathscr{I}^\bullet,\mathscr{I}^\bullet)_\perp|_{\PP^\mathrm{stab}}
\cong
\RHom_{\pi_X}(I^\bullet,I^\bullet)_\perp|_{\PP^\mathrm{stab}}\,.\]

Following \cite[Proposition~6.8]{TT2}, we find
\begin{equation}\label{EqCompareVirTan}
\begin{split}
\RHom_{\pi_X}(I^\bullet,I^\bullet)_\perp
	={}& \RHom_{\pi_X}(I^\bullet,I^\bullet)_0 - R^{\geq1}\pi_{S*}\O_S + R^{\leq1}\pi_{S*}\omega_S \otimes \t \\
	={}& p^*\RHom_{\pi_X}(\E,\E) - R\pi_{S*}\O_S + R\pi_{S*}\omega_S \otimes \t \\
	&  - \pi_{X*}\E_m(1) + (\pi_{X*}\E_m(1))^\vee\otimes \t + \O_\PP - \O_\PP\otimes\t \\
	={}& p^*\RHom_{\pi_X}(\E,\E)_\perp  - T_{\PP/M_\alpha^n} + T_{\PP/M_\alpha^n}^\vee \otimes \t
\end{split}
\end{equation}
in \(K^0_{\CC^*}(\PP)\).
The virtual tangent bundle of \(M_\alpha^n\) is precisely the \(\CC^*\) fixed part of \(-\RHom_{\pi_S}(\E,\E)_\perp\) (use \cite[Corollary 2.26]{TT} to compare \(-\RHom_{\pi_S}(\E,\E)_\perp\) with \eqref{EqPOTM}). If follows by \eqref{EqObstructionPi} that the isomorphism of Proposition~\ref{PropComparison} identifies the virtual tangent bundles of \(\P^n_\alpha\) and \((\PP^\mathrm{stab})^{\CC^*}\). By Siebert's formula \cite{Sie} and its K-theoretic analogue \cite[Theorem~4.2]{T2}, we find that virtual classes and virtual structure sheaves agree. Moreover, by Theorem~\ref{TheCasesJS}, we have the following proposition
\begin{proposition}\label{PropVirEqStab}
We have equalities
\begin{align*}
[\P_\alpha^n]^\vir
	& = [(\PP^\mathrm{stab})^{\CC^*}]^\vir \\
	& = [\PP^{\CC^*}]^\vir
\intertext{where we have identified}
A_*(\P_\alpha^n)
	& = A_*((\PP^\mathrm{stab})^{\CC^*})\\
	& \subset A_*(\PP^{\CC^*})\,.
\end{align*}
Similarly, we have
\begin{align*}
\O_{\P_\alpha^n}^\vir
	& =\O_{(\PP^\mathrm{stab})^{\CC^*}}^\vir \\
	& =  \O_{\PP^{\CC^*}}^\vir
\intertext{in}
K_0(\P_\alpha^n)
	& = K_0((\PP^\mathrm{stab})^{\CC^*})\\
	& \subset K_0(\PP^{\CC^*})\,.
\end{align*}
\end{proposition}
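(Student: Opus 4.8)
The plan is to deduce both lines of the proposition by combining the isomorphism of Proposition~\ref{PropComparison} with the vanishing alternative in Theorem~\ref{TheCasesJS}; the two equalities in each line are handled by separate mechanisms, and most of the work for the first has in fact already been carried out in the paragraph preceding the proposition statement.

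For the equalities $[\P_\alpha^n]^\vir = [(\PP^\mathrm{stab})^{\CC^*}]^\vir$ and $\O_{\P_\alpha^n}^\vir = \O_{(\PP^\mathrm{stab})^{\CC^*}}^\vir$, I would observe that pull-back along \eqref{EqComparison} identifies the restrictions of the two universal Joyce--Song pairs, hence $\RHom_{\pi_X}(\mathscr{I}^\bullet,\mathscr{I}^\bullet)_\perp|_{\PP^\mathrm{stab}} \cong \RHom_{\pi_X}(I^\bullet,I^\bullet)_\perp|_{\PP^\mathrm{stab}}$; combining this with the $K$-theoretic computation \eqref{EqCompareVirTan} and the description \eqref{EqObstructionPi} of the localised obstruction theory on $\PP_i$, the $\CC^*$-fixed parts of the two virtual tangent bundles coincide under the isomorphism of Proposition~\ref{PropComparison}. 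Siebert's formula \cite{Sie}, together with its $K$-theoretic analogue \cite[Theorem~4.2]{T2}, then promotes this equality of virtual tangent bundles to the claimed equalities of virtual classes and of virtual structure sheaves, under the tautological identifications $A_*(\P_\alpha^n) = A_*((\PP^\mathrm{stab})^{\CC^*})$ and $K_0(\P_\alpha^n) = K_0((\PP^\mathrm{stab})^{\CC^*})$.

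For the equalities $[(\PP^\mathrm{stab})^{\CC^*}]^\vir = [\PP^{\CC^*}]^\vir$ and $\O^\vir_{(\PP^\mathrm{stab})^{\CC^*}} = \O^\vir_{\PP^{\CC^*}}$, I would use the open--closed stratification $\PP^{\CC^*} = \PP_0 \sqcup \cdots \sqcup \PP_s$. By Lemma~\ref{LemmaComparison} every stratum $\PP_i$ is either contained in $\PP^\mathrm{stab}$ or disjoint from it, so $(\PP^\mathrm{stab})^{\CC^*}$ is precisely the union of those $\PP_i$ lying in $\PP^\mathrm{stab}$ (the corollary following Lemma~\ref{LemmaComparison}), and both virtual classes and virtual structure sheaves are additive across this decomposition. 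It therefore suffices to show the strata with $\PP_i \cap \PP^\mathrm{stab} = \emptyset$ contribute nothing. On such a stratum no fibre of $(\E(1),s)|_{\PP_i}$ is stable, so case~i) of Theorem~\ref{TheCasesJS} fails and we are forced into case~ii), i.e.\ $[\PP_i]^\vir = 0$; summing over $i$ gives the Chow-group equality.

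The one point requiring a little extra care, and which I expect to be the main (minor) obstacle, is the $K$-theoretic vanishing $\O^\vir_{\PP_i} = 0$ on those non-stable strata, since Theorem~\ref{TheCasesJS} as stated only records the vanishing of the virtual \emph{class}. I would recover it by rerunning the argument in the proof of that theorem at the level of $K$-theory: the cosection $\sigma_i$ of \eqref{EqCosections} is surjective on the complement of the degeneracy locus $M_\alpha^n(\sigma_i)$, so $K$-theoretic cosection localisation \cite{KL} yields $\O^\vir_{M_\alpha^n} = \iota_* \O^\vir_{M_\alpha^n(\sigma_i),\mathrm{loc}}$, and restricting the formula for $\O^\vir_{\PP_i}$ from Section~\ref{SecLocal} to $\PP_i(\sigma_i)$ exhibits, via cohomology-and-base-change exactly as in the computation of \eqref{EqCapEul} in the proof of Theorem~\ref{TheCasesJS}, a trivial $\O$-summand inside the relevant $\Lambda^\bullet$-factor, which makes the whole class vanish since $\Lambda^\bullet$ of a trivial line bundle is $0$. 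Everything else is bookkeeping with the identifications of Chow and $K$-groups.
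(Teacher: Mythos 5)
Your proposal is correct and follows essentially the same route as the paper: the first equality in each line is exactly the paper's argument in the paragraph preceding the proposition (identification of $\RHom_{\pi_X}(\mathscr{I}^\bullet,\mathscr{I}^\bullet)_\perp$ and $\RHom_{\pi_X}(I^\bullet,I^\bullet)_\perp$ over $\PP^\mathrm{stab}$, comparison of $\CC^*$-fixed parts via \eqref{EqCompareVirTan} and \eqref{EqObstructionPi}, then Siebert's formula and its K-theoretic analogue), and the second equality is deduced, as in the paper, from Lemma~\ref{LemmaComparison} together with Theorem~\ref{TheCasesJS}. The one place where you go beyond the paper is the K-theoretic vanishing on the unstable strata: the paper simply invokes Theorem~\ref{TheCasesJS}, which as stated only records $[\PP_i]^\vir=0$, while the structure-sheaf half of the proposition also needs $\O^\vir_{\PP_i}=0$. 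Your patch --- K-theoretic cosection localisation to write $\O^\vir_{M_\alpha^n}$ as a pushforward from the degeneracy locus $M_\alpha^n(\sigma_i)$, followed by the observation that $\iota^{\prime*}p_i^*\pi_{S*}E_m^{-(i-1)}(1)$ contains a trivial summand $\O$ on $\PP_i(\sigma_i)$, so the $\Lambda^\bullet$-factor in $\O^\vir_{\PP_i}$ dies because $\Lambda^\bullet$ is multiplicative and $\Lambda^\bullet\O = 1-1 = 0$ --- is valid and is precisely the argument the paper leaves implicit; the only caveat is that the reference required is Kiem--Li's K-theoretic cosection localisation rather than the Chow-level statement cited as \cite{KL} in the proof of Theorem~\ref{TheCasesJS}.
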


\section{The unrefined case}\label{SecUnref}
Theorem \ref{TheCasesJS} (or its consequence Proposition~\ref{PropVirEqStab}) allows us compute the contribution of \(\P_\alpha^n\) to the Vafa-Witten invariant as a integral over the virtual class of \(\PP\), which might contain unstable Joyce-Song pairs. We will use this to prove the unrefined part of Theorem~\ref{ResultA}.

Let
\begin{align*}
N^\vir_{\PP^{\CC^*}\!\!/\PP}
	& = \left(T_\PP |_{\PP^{\CC^*}}\right)^\mathrm{mov} \\
	& = \left(\left(T_{\PP/M_\alpha^n} - T_{\PP/M_\alpha^n}^\vee \otimes \t \right)\middle|_{\PP^{\CC^*}}\right)^\mathrm{mov} 
\end{align*}
be the virtual normal bundle to \(\PP^{\CC^*}\) in \(\PP\), and let \(N^\vir_{\P_\alpha^n/ \P^\perp}\) be the virtual normal bundle to \(\P_\alpha^n\) in \(\P^\perp\). We will also write
\[N_\alpha^n = \left(-\RHom_{\pi_X}(\E,\E)_\perp \right)^\text{mov}\]
for the class in \(K^0_{\CC^*}(M_\alpha^n)\).
Note that by \eqref{EqCompareVirTan} we have
\begin{align*}
N^\vir_{\P_\alpha^n/ \P^\perp}
	& = \left(-\RHom_{\pi_X}(I^\bullet , I^\bullet)_\perp \vert_{(\PP^\mathrm{stab})^{\CC^*}}\right)^\text{mov} \\
	& = \left( p^*N_\alpha^n + N^\vir_{\PP^{\CC^*}\!\!/\PP}\middle)\right\vert_{(\PP^\mathrm{stab})^{\CC^*}}
\end{align*}
using the identification
\[\P_\alpha^n \cong (\PP^\mathrm{stab})^{\CC^*}\]
of Proposition~\ref{PropComparison}.
\begin{remark}
If the family of sheaves \(\E\) on \(M_\alpha^n \times X \rightarrow M_\alpha^n\) is Gieseker \emph{stable}, \(M_\alpha^n\) is an open and closed subscheme of the fixed locus
\((\N^\perp)^{\CC^*}\) of the moduli space \(\N^\perp\) (defined in \cite{TT}) of stable compactly supported sheaves on \(X\) of type \(\gamma^\perp\). In this case, the class \(N_\alpha^n\) is the virtual normal bundle to \(M_\alpha^n\) in \(\N^\perp\).
\end{remark}

By virtual \(\CC^*\) localisation \cite{GP} and Proposition~\ref{PropVirEqStab} we have
\begin{align*}
\int_{[\PP]^\vir} \frac{1}{p^*e(N_\alpha^n)}
	& = \int_{[\PP^{\CC^*}]^\vir} \frac{1}{p^*e(N_\alpha^n) e(N_{\PP^{\CC^*}\!\!/\PP}^\vir)} \notag \\
	& =  \int_{[\P^n_\alpha]^\vir} \frac{1}{e\left(N^\vir_{\P^n_\alpha/\P^\perp}\right)}\,.
\end{align*}
Note that the last line is exactly the contribution of the open and closed locus
\[\P_\alpha^n \subset (\P^\perp)^{\CC^*} = (\P^\perp_\gamma(m))^{\CC^*}\]
to the integral of Conjecture-Definition~\ref{ConDefUnref}. On the other hand, by Corollary~\ref{CorVirClass} and the projection formula, we have
\begin{align*}
\int_{[\PP]^\vir} \frac{1}{p^*e(N_\alpha^n)} 
	& = \int_{p^*[M_\alpha^n]^\vir}\frac{ e\!\left(T_{\PP/M_\alpha^n}^\vee\right)}{p^*e(N_\alpha^n)}  \\
	& = \int_{[M_\alpha^n]^\vir} \frac{p_*e\!\left(T_{\PP/M_\alpha^n}^\vee\right)}{e(N_\alpha^n)} \\
	& = (-1)^{\chi(\gamma(m))-1} \chi(\gamma(m)) \int_{[M_\alpha^n]^\vir} \frac{1}{e(N_\alpha^n)} \,.
\end{align*}
It follows that the vertical contribution \(\VW_{\gamma}^\vertical\) to the (unrefined) Vafa-Witten invariant (defined in Section~\ref{SubsectionResults}), is given by
\begin{equation}\label{EqWellDefined}
\VW_\gamma^\vertical = \sum_{\alpha,n} \int_{[M_\alpha^n]^\vir} \frac{1}{e(N_\alpha^n)}\,,
\end{equation}
where the sum is taken over classes
\[\alpha_0,\ldots,\alpha_s \in H^2(S,\ZZ)\,, \quad n_0,\ldots,n_s \in H^4(S,\ZZ)\]
satisfying
\[c_1 = \alpha_0 + \ldots + \alpha_s\,, \quad c_2 = n_0 + \ldots + n_s + \sum_{i<j} \alpha_i \alpha_j\,.\]
This proves the unrefined part of Theorem~\ref{ResultA}.

\begin{remark}
The computation in this section is used in \cite[Proposition 6.8]{TT2} to show that the different definitions of the Vafa-Witten invariant given in \cite{TT} and \cite{TT2} agree in the case that $E$ is stable.
\end{remark}

\section{The integrals}\label{SecIntegrals}
Recall from Sections~\ref{SecConstruction} and \ref{SectionVirtualClass} that we write
\begin{align*}
\beta_1
	& = \alpha_1-\alpha_0 + c_1(\omega_S) \\
	& \vdotswithin{=} \\
\beta_{s}
	& = \alpha_{s} - \alpha_{s-1} + c_1(\omega_S) \,,
\end{align*}
for the classes in \(H^2(S,\ZZ)\), and
\[\eta\colon M_\alpha^n \rightarrow S_\beta^n\]
for the surjective \'etale morphism to the nested Hilbert scheme. Note that since
\[\RHom_{\pi_X}(\E,\E)_\perp = \RHom_{\pi_X}(\E\otimes M,\E\otimes M)_\perp\]
for any line bundle \(M\) on \(S\), the class \(N_\alpha^n\) is invariant under the action of \(\Pic_0(S)[s+1]\) on \(M_\alpha^n\) (see Section~\ref{SectionVirtualClass}). It follows that it descents along \(\eta\) to a class \(N_\beta^{[n]}\) in \(K^0_{\CC^*}(S_\beta^{[n]})\), so we have
\begin{align}\label{EqIntegralsEta}
\int_{[M_\alpha^n]^\vir}\frac{1}{e(N_\alpha^n)}
	& = \int_{\eta_*[M_\alpha^n]^\vir}\frac{1}{e(N_\beta^{[n]})} \\
	& = \#\Pic_0(S)[s+1]\int_{[S_\beta^{[n]}]^\vir}\frac{1}{e(N_\beta^{[n]})}\,. \notag
\end{align}

Recall from \cite{TT}, that we have
\begin{equation}\label{EqSpectralObstruction}
\RHom_{\pi_X}(\E,\E)_\perp = \RHom_{\pi_S}(E,E)_0 - \RHom_{\pi_S}(E,E\otimes\omega_S\otimes \t)_0
\end{equation}
in \(K^0_{\CC^*}(M_\alpha^n)\). Choose line bundles
\[L_i\in \Pic_{\alpha_i}(S) \,, \quad i = 0,\ldots,s \]
and write
\[E^{[n]}_{\vectorL} = \bigoplus_{i=0}^s \I^{[n_i]} \otimes L_i \otimes \t^{-i}\]
for the family of sheaves on
\[\pi_S\colon S^{[n_0]} \times \cdots \times S^{[n_s]} \times S \rightarrow S^{[n_0]} \times \cdots \times S^{[n_s]} \,.\]
Finally, define a class
\[N_\vectorL^{[n]} \coloneqq \Big( \RHom_{\pi_S}\left(E^{[n]}_{\vectorL},E^{[n]}_{\vectorL}\otimes\omega_S\otimes \t\right)_0 - \RHom_{\pi_S}\left(E^{[n]}_{\vectorL},E^{[n]}_{\vectorL}\right)_0 \Big)^\mathrm{mov}\]
in \(K^0_{\CC^*}(S^{[n_0]} \times \cdots \times S^{[n_s]})\).

Write
\[\rho\colon S_\beta^{[n]} \rightarrow S^{[n_0]} \times \cdots \times S^{[n_s]}\]
for the natural morphism. By \cite[Theorem 5.6 and (the discussion before) Theorem 6]{GT2}, we have
\begin{align} \label{AlignCompareLaa1}
\VW_\beta^{[n]}
	\coloneqq{}& \int_{[S_\beta^{[n]}]^\vir}\frac{1}{e(N_\beta^{[n]})} \notag \\
	={}& \int_{\rho_*[S_\beta^{[n]}]^\vir}\frac{1}{e(N_\vectorL^{[n]})} \notag \\
	={}& \SW(\beta) \int_{S^{[n_0]} \times \cdots \times S^{[n_s]}} \frac{\mathsf{co}_\vectorL^{[n]}}{e(N_\vectorL^{[n]})}
\end{align}
where
\[\SW(\beta) = \SW(\beta_1)\cdots \SW(\beta_s)\]
is the product of Seiberg-Witten invariants of the classes \(\beta_i \in H^2(S,\ZZ)\), and
\begin{align*}
\mathsf{co}_\vectorL^{[n]}
	\coloneqq{} & c_\mathrm{top}\left(\bigoplus_{i=1}^s \RHom_{\pi_S}\left(\I^{[n_{i-1}]} \otimes L_{i-1}, \I^{[n_i]} \otimes L_i \otimes \omega_S\right)_0 \right)\\
	\in{} & A^{2|n| - n_0 - n_s}(S^{[n_0]} \times \cdots \times S^{[n_s]})
\end{align*}
is a Carlsson-Okounkov type class \cite{CO,GT2}.
The integral \eqref{AlignCompareLaa1} is precisely the one of \cite[Equation~4.4]{L}, which computes vertical contributions to the Vafa-Witten invariant in the case that stable = semistable.

\begin{proof}[Proof of Theorem~\ref{ResultB} (unrefined case)]
Fix a rank \(r = s+1\). We will use the following convention.
For \(\beta = (\beta_1,\ldots,\beta_s)\in H^2(S,\ZZ)^s\), we will write
\begin{align*}
\beta^\vee
	& = (\beta^1,\ldots,\beta^s) \\
	& = (c_1(\omega_S) - \beta_1, \ldots, c_1(\omega_S) - \beta_s)\,.
\end{align*}
In particular we have
\begin{align*}
\SW(\beta^\vee)
	& = \SW(\beta^1) \cdots \SW(\beta^s) \\
	& = (-1)^{s \cdot \chi(\O_S)} \SW(\beta)\,.
\end{align*}
By \cite[Propositions 6.3 and 7.2]{L}, and the proof of Theorem A of loc.cit., there exist universal Laurent series (i.e.\ depending only on \(r\))
\[A,B,C_{ij} \in \QQ(\!(q^\frac{1}{2r})\!)\,, \quad 1\leq i \leq j <r\]
such that
\begin{align}\label{EqUnivSeriesVW}
q^{d(\beta)} \sum_n \VW_\beta^{[n]} q^{|n|}
	& =  \SW(\beta^\vee)A^{\chi(\O_S)} B^{K_S^2}  \prod_{i\leq j} C_{ij}^{\beta^i\beta^j}
\end{align}
for any surface \(S\) and classes \(\beta = (\beta_1,\ldots,\beta_s) \in H^2(S,\ZZ)\).
We have use a normalizing factor \(q^{d(\beta)}\) given by
\[d(\beta) = -\sum_{1\leq i,j < r} \frac{i(r-j)}{2r}\beta^i\beta^j\,.\]

Now fix a surface \(S\) with \(p_g(S)>0\), a class \(c_1 \in H^2(S,\ZZ)\) and a line bundle \(L\) on \(S\) with \(c_1(L) = c_1\). For an \(r\)-tuple
\[\alpha = (\alpha_0,\ldots,\alpha_s)\in (H^2(S,\ZZ))^r\,,\]
write
\[\beta(\alpha) = (\alpha_1 - \alpha_0 + c_1(\omega_S), \ldots, \alpha_s-\alpha_{s-1} + c_1(\omega_S))\,.\]
Also write
\[c_2(\alpha,n) = n_0 + \ldots + n_s + \sum_{i<j} \alpha_i\alpha_j\]
for \(n \in (\ZZ_{\geq0})^r\) and \(\alpha\) as above.

By \cite[Lemma 2.6]{L}, we have have
\[c_2(\alpha,n) = n_0 + \ldots + n_s + d(\beta(\alpha)) + \frac{r-1}{2r}c_1^2.\]
Following the proof of \cite[Theorem~A]{L}, we can use this to rewrite the sum
\[\sum_{\alpha,n} \VW_{\beta(\alpha)}^{[n]} q^{c_2(\alpha,n)}\]
over \(\alpha \in H^2(S,\ZZ)^r\) and \(n \in (\ZZ_{\geq0})^r\) satisfying
\[\sum_{i=0}^{r-1} \alpha_i = c_1\,,\]
as a sum
\[\#H^2(S,\ZZ)[r] \, \sum_{\beta,n} \VW_{\beta}^{[n]} q^{|n| + d(\beta) + \frac{r-1}{2r}c_1^2}\]
over \(\beta \in H^2(S,\ZZ)^{r-1}\) and \(n \in (\ZZ_{\geq0})^r\)  satisfying
\begin{equation} \label{EqSumBeta}
\sum_{i=1}^{r-1} i \beta^i \equiv c_1 \mod rH^2(S,\ZZ)\,.
\end{equation}
(To see this, note that the equations
\[\beta(\alpha) = \beta \, , \quad \sum_{i=0}^s \alpha_i = c_1\]
have precisely \(\#H^2(S,\ZZ)[r]\) solutions \(\alpha\) for any \(\beta\) and \(c_1\) satisfying \eqref{EqSumBeta}.) By \eqref{EqWellDefined}, \eqref{EqIntegralsEta} and \eqref{EqUnivSeriesVW} it follows that
\begin{align*}
\sum_{c_2} \VW_{(r,L,c_2)}^\vertical q^{c_2}
	& = \sum_{\alpha,n} \int_{[M_\alpha^n]^\vir} \frac{1}{e(N_\alpha^n)} q^{c_2(\alpha,n)} \\
	& = \#\Pic_0(S)[r] \, \sum_{\alpha,n} \VW_{\beta(\alpha)}^{[n]} q^{c_2(\alpha,n)} \\
	& = \#\Pic_0(S)[r] \cdot \# H^2(S,\ZZ)[r] \, \sum_{\beta,n} \VW_{\beta}^{[n]} q^{|n| + d(\beta) + \frac{r-1}{2r}c_1^2} \\
	& = \#\Pic(S)[r] \, q^{\frac{r-1}{2r}c_1^2} \,A^{\chi(\O_S)} \, B^{K_S^2} \sum_{\beta} \SW(\beta^\vee) \, \prod_{i\leq j} C_{ij}^{\beta^i\beta^j} \,. \qedhere
\end{align*}

\end{proof}

\section{Refined invariants}
We will briefly discuss refined invariants. The proofs of Theorems \ref{ResultA} and \ref{ResultB} are perfectly analogous to the discussion of Sections \ref{SecUnref} and \ref{SecIntegrals}. Using the equivariant structure \eqref{EqEquivStrucTan}, we have by Corollary~\ref{CorVirStruc},
\[\O_\PP^\vir = p^* \O_{M_\alpha^n}^\vir \otimes \Lambda^\bullet (T_{\PP/M_\alpha^n} \otimes \t^{-1}) \in K_0^{\CC^*}\!(\PP)\,.\]
Choose a square root
\[K^\frac{1}{2} \coloneqq \det \left(-\RHom_{\pi_X}(I^\bullet , I^\bullet)_\perp ^\vee \right)^\frac{1}{2}\,.\]
Note that by equations \eqref{EqCompareVirTan} and \eqref{EqSpectralObstruction}, we can take 
\begin{align*}
K^\frac{1}{2}
	& = p^*\det\left(\RHom_{\pi_X}(\E,\E)_\perp\right)^\frac{1}{2} \otimes \omega_{\PP/M_\alpha^n} \otimes \t^\frac{\chi(\gamma(m))-1}{2}
\end{align*}
with
\[\det(\RHom_{\pi_X}(\E,\E)_\perp)^\frac{1}{2} \coloneqq \det \left(\RHom_{\pi_S}(E,E)_0\otimes \t^{-\frac{1}{2}}\right)\,.\]

By the K-theoretic virtual localisation formula \cite{Qu}, we have, again using \ref{PropVirEqStab}:

\begin{align*}
\chi_t\left(\PP, \frac{\O_\PP^\vir \otimes K^\frac{1}{2}}{p^*\Lambda^\bullet (N_\alpha^n)^\vee } \right)
	& = \chi_t\left(\PP^{\CC^*}, \frac{\O_{\PP^{\CC^*}}^\vir \otimes K^\frac{1}{2}}{p^*\Lambda^\bullet (N_\alpha^n)^\vee \otimes
		\Lambda^\bullet (N_{\PP^{\CC^*}\!\!/\PP}^\vir )^\vee}\right)\\
	& = \chi_t\left(\P^n_\alpha, \frac{\O_{\P^n_\alpha}^\vir \otimes K^\frac{1}{2}}{\Lambda^\bullet (N^\vir_{\P^n_\alpha/\P^\perp})^\vee } \right) \,.
\end{align*}

On the other hand, he have
\begin{align*}
\chi_t\left(\PP, \frac{\O_\PP^\vir \otimes K^\frac{1}{2}} {p^*\Lambda^\bullet (N_\alpha^n)^\vee } \right)
	& = \chi_t\left(M_\alpha^n, p_*\left(\frac{\O_{\PP}^\vir \otimes K^\frac{1}{2}} {p^*\Lambda^\bullet (N_\alpha^n)^\vee } \right)\right) \\
	& = \chi_t\Bigg(M_\alpha^n, \frac{\O_{M_\alpha^n}^\vir \otimes  \det(\RHom_{\pi_X}(\E,\E)_\perp)^\frac{1}{2} }
		{\Lambda^\bullet (N_\alpha^n)^\vee } \\
	& \quad \otimes p_*\left( \Lambda^\bullet (T_{\PP/M_\alpha^n} \otimes \t^{-1}) \otimes \omega_{\PP/M_\alpha^n} \right) \otimes \t^\frac{\chi(\gamma(m))-1}{2} \Bigg) \\
	& = \chi_t\Bigg(M_\alpha^n, \frac{\O_{M_\alpha^n}^\vir \otimes  \det(\RHom_{\pi_X}(\E,\E)_\perp)^\frac{1}{2} }
		{\Lambda^\bullet (N_\alpha^n)^\vee } \Bigg)\\
	& \quad \times (-1)^{\chi(\gamma(m))} [\chi(\gamma(m))]_t
\,.
\end{align*}
Here we have used
\begin{align*}
p_*\left(\Lambda^\bullet (T_{\PP/M_\alpha^n} \otimes \t^{-1})\otimes \omega_{\PP/M_\alpha^n}  \right) \otimes \t^\frac{\chi(\gamma(m))-1}{2}
	& =  (-1)^{\chi(\gamma(m))} \frac{p_* \Lambda^\bullet (\Omega_{\PP/M_\alpha^n} \otimes \t)} {\t^{\frac{\chi(\gamma(m))-1}{2}}} \\
	& =  (-1)^{\chi(\gamma(m))} \frac{\t^{\chi(\gamma(m))-1} + \ldots + 1}{ \t^\frac{\chi(\gamma(m))-1}{2}} \\
	& =  (-1)^{\chi(\gamma(m))} [\chi(\gamma(m))]_\t
\,.
\end{align*}
As in the unrefined case, we conclude that the vertical contribution to the refined Vafa-Witten invariant is well defined, and c.f.\ \eqref{EqWellDefined} given by a sum
\[\VW_{\gamma}^\vertical(t) = \sum_{\alpha,n} \chi_t\Bigg(M_\alpha^n, \frac{\O_{M_\alpha^n}^\vir \otimes  \det(\RHom_{\pi_X}(\E,\E)_\perp)^\frac{1}{2}}{\Lambda^\bullet (N_\alpha^n)^\vee } \Bigg)\,.\]
This finishes the proof of Theorem~\ref{ResultA}.

By the virtual Riemann-Roch formula \cite{CK,FG}, we can compute the contribution of \(M_\alpha^n\) to the refined Vafa-Witten invariant by the integral
\[\left[\int_{[M_\alpha^n]^\vir} \frac{\ch\left( \det(\RHom_{\pi_X}(\E,\E)_\perp)^\frac{1}{2}\right)}{\ch\big(\Lambda^\bullet (N_\alpha^n)^\vee \big)}  \Td \left( T_{M_\alpha^n}\right) \right]_{\ch{\t} = t}\,,\]
where
\[T_{M_\alpha^n} = \left(-\RHom_{\pi_X}(\E,\E)_\perp\right)^\mathrm{fix}\]
denotes the virtual tangent bundle of \(M_\alpha^n\). The discussion of Section~\ref{SecIntegrals} now literally applies to this integral: the integrant descents to a class in \(A^*_{\CC^*}(S_\beta^{[n]})\), which we can integrate over
\[\eta_*[M_\alpha^n]^\vir = \#\Pic_0(S)[r] \cdot [S_\beta^{[n]}]^\vir \,.\]
Again, we can use the results of \cite{GT2} to rewrite the resulting integral as an integral over the product of Hilbert schemes of points
\[S^{[n_0]} \times \cdots \times S^{[n_s]}\,.\]
In fact, after taking out the factor \(\#\Pic_0(S)[r]\), it is precisely the one given in \cite[Equation 4.5]{L}. Copying the proof of the unrefined case given in Section \ref{SecIntegrals}, Theorem~\ref{ResultB} now follows from \cite[Propositions 6.5 and 7.5]{L}.

\bibliographystyle{mijnhamsalpha}
\bibliography{proefschrift}{}

\end{document}